\newtheorem{thm}{Theorem}[section]
\newtheorem{lem}[thm]{Lemma}
\newtheorem{prop}[thm]{Proposition}
\theoremstyle{definition}
\newtheorem{defn}[thm]{Definition}
\newtheorem{rem}[thm]{Remark}
\newtheorem{exa}[thm]{Example}
\numberwithin{equation}{section}
\begin{document}

%%%%% To ease editing, for IMPAN journals add:

\baselineskip=17pt

%%%%%%%%%%%

%% In the running head, replace first names by initials 
%% and give an abbreviation of the title.

\title{Metric characterisation of connectedness for topological spaces}

\author[I. Weiss]{Ittay Weiss}
\address{School of Computing, Information, and Mathematical Sciences \\ The University of the South Pacific\\
Suva, Fiji}
\email{weittay@gmail.com}

\date{}

\begin{abstract}
Connectedness, path connectedness, and uniform connectedness are well-known concepts. In the traditional presentation of these concepts there is a substantial difference
between connectedness and the other two notions, namely connectedness
is defined as the absence of disconnectedness, while path connectedness
and uniform connectedness are defined in terms of connecting paths
and connecting chains, respectively. In compact metric spaces uniform
connectedness and connectedness are well-known to coincide, thus the
apparent conceptual difference between the two notions disappears. Connectedness in topological spaces can also be defined in terms of chains governed by open coverings in a manner that is more reminiscent of path connectedness.
We present a unifying metric formalism for connectedness, which encompasses both connectedness of topological spaces and uniform connectedness of uniform spaces, and which further extends to a hierarchy of notions of connectedness.
\end{abstract}

\subjclass[2010]{Primary 54D05; Secondary 54A05, 54E35}

\keywords{connectedness, uniform connectedness, continuity space, general topology, metric space}

\maketitle

\section{Introduction}

The notions of connectedness and path connectedness for topological
spaces are, of course, well-known and, while the concepts are related,
their standard formulations are almost diametrically opposite each other, in
the following sense. Path connectedness is a positive condition in
that if a space is path connected, then for all pairs of points in
it there exists a path between the two points. Connectedness is a
negative condition in that if a space is connected, then no non-trivial
clopen sets exist. An equivalent definition of connectedness in a non-empty topological space $X$ is the following \v{C}ech-type formalism. Say that the points $x$ and $z$ are \emph{connected} if for every open covering of $X$ there exists a \emph{chain} from $x$ to $z$, namely a finite sequence $y_1,\dots, y_n$, with $y_1=x$ and $y_n=z$, such that for all $1\le k < n$ both $y_k$ and $y_{k+1}$ lie in the same open set of the covering. Then a space $X\ne \emptyset$ is connected if $x$ and $z$ are connected for all $x,z\in X$ (the same criterion can be given in a point-free fashion as well).  

If $X$ is a metric space, then open coverings of $X$ are related (roughly bijectively) to 
functions $R\colon X\to (0,\infty )$ (every such function yields the covering $\{B_{R(x)}(x)\mid x\in X\}$, 
and any open covering $\{U_\alpha \}$ yields a 
function $R$ by choosing, for each $x\in X$, a radius $R(x)>0$ such that $B_{R(x)}(x)$ is 
contained in some member of the covering). The above description of connectedness can then be 
given in terms of distances as follows. Say that two points $x$ and $z$ are \emph{connected} if for 
every such function $R$ there exists a \emph{walk} from $x$ to $z$, i.e., a finite 
sequence $y_1,\dots, y_n$, with $y_1=x$ and $y_n=z$, and such that, for all $1\le k < n$, 
either $y_k \in B_{R(y_{k+1})}(y_{k+1})$ or $y_{k+1}\in B_{R(y_k)}(y_k)$. 
Then a space $X\ne \emptyset$ is connected if $x$ and $z$ are connected for all $x,z\in X$. 

Connectedness for topological spaces can be defined in more than one way. It is well-known of course that for a topological space $X$ the conditions

\begin{itemize}
\item the only clopen subsets of $X$ are $\emptyset $ and $X$
\item every continuous function $X\to {\because}$ (where $\because$ is a discrete space) is constant
\end{itemize}
are equivalent. The proof is very nearly a tautology, yet the perspective each condition offers is slightly different, and each has its merits. Neither though can be said to be a positive condition in the same sense that a path connected space implies the existence of a connecting entity between any two points. This remark becomes clearer when one compares this situation with the definition of uniform connectedness (also known as Cantor connectedness) for uniform spaces. For a uniform space $X$ the following three conditions are equivalent (see \cite{PervinOnUnifConn}), and each may be taken as the definition of uniform connectedness for $X\ne \emptyset$

\begin{itemize}
\item the only uniformly clopen subsets of $X$ are $\emptyset $ and $X$
\item every uniformly continuous function $X\to {\because}$ is constant
\item for every entourage $E$ and all points $x,z\in X$ there exists a finite sequence $y_1,\ldots, y_n$ of points in $X$ such that $y_1=x$, $y_n=z$, and $y_kEy_{k+1}$ holds for all $1\le k < n$. 
\end{itemize}

Here a subset $U\subseteq X$ is uniformly open if there exists an entourage $E$ such that $E[x]\subseteq U$ for all $x\in U$, and $E[x]$ is the set $\{y\in X\mid xEy\}$. The proof of the equivalency is not hard at all, and, again, each condition has its merits. Clearly though, the third condition is a positive one, asserting the existence of a connecting entity for points, while the former two are not. 

In the presence of a metric the third condition in the list above is directly translated to the following condition which is often called \emph{chain connectedness}. A metric space $X$ is said to
be \emph{chain connected} if for all $\varepsilon>0$ and for
all $x,z\in X$ there exists an \emph{$\varepsilon$-walk} from $x$
to $z$, that is a sequence $y_{1},\dots,y_{n}\in X$ such that $y_{1}=x$,
$y_{n}=z$, and $d(y_{k+1},y_{k})\le\varepsilon$, for all $1\le k<n$. It is then quite easy to see that given a uniform space and a metric inducing it, the concepts of uniform connectedness and chain connectedness coincide.

Below, the metric observations above are examined and generalised in two directions. Firstly, by allowing $\varepsilon $ to vary at each point of the space one obtains a condition on a metric space equivalent to connectedness of the induced topological space. Second, by considering continuity spaces, one obtains a metric characterisation of connectedness for all topological spaces. The passage to continuity spaces involves replacing the non-negative real numbers by a value quantale, as well as abandoning the requirement that $d(x,y)=d(y,x)$, obviously affecting the proofs. 

There results a formalism rendering the concepts of uniform connectedness and connectedness as being completely analogous, and in particular providing the missing positive condition for topological connectedness analogous to the third condition listed above for uniform connectedness. The usefulness of the new formalism is also examined by looking at some well-known results from this new point of view. Moreover, the formalism suggests a natural hierarchy of notions of connectedness. 
\begin{rem}
Notions of connectedness were investigated early on in the development of topology,
e.g., for topological, uniform, syntopogenous, and bitopological spaces in, respectively,
\cite{Wiegandt, PervinOnUnifConn,ConnSyntopSp,LH1}, and surveyed in general
in \cite{LH3,LH2}. The current article can be seen as a companion of the former group of articles. 

To further place this work in the context of the study of connectedness we mention the categorical approach developed by Preuss in \cite{p1, p2, p3} dating back to the 1970's (see also \cite{Preuss}) and in particular \cite{ClosureOperators}, as well as the work of Herrlich (\cite{Herrlich2, Herrlich1}). The most natural definition of connectedness suitable for a notion of connectedness in an arbitrary category is that a non-empty space is connected when every mapping to a discrete space is constant. Replacing the class of discrete spaces in $\bf Top$ by a suitable class of objects in an arbitrary category yields a general theory of connectedness. Each of the notions of connectedness introduced below is characterised by a similar constancy condition with respect to discrete spaces in a suitably defined category, and thus the categorical approach subsumes ours. However, it is the positive aspect of our definition that sets it apart from the inherently negative categorical notion of connectedness. We refer the reader to \cite{ClosureOperators} for a quick review and a fuller reference list. 

We also mention Lowen's approach spaces (\cite{lowen1997approach}), a concept developed in order to unify topological and metric notions. Our use of Flagg's metric formalism via result given in \cite{WeissTopMetEquiv} shows that for the purposes of connectedness there is not need to leave the category of topological spaces, at least not up to an equivalence of categories. 
\end{rem}

\begin{rem}
Flagg's \cite{FlaggQuantales} investigates continuity spaces in a
broad context. However, it appears that most further work utilising
continuity spaces belongs largely to domain theory. Recent
work, namely \cite{WeissTopMetEquiv,BrunoWeiss,ChandWeiss} investigates
the applicability of continuity spaces in the context of topology.
This article is another result in that direction. 
\end{rem}
The plan of the paper is as follows. Section~\ref{sec:Continuity-spaces} briefly introduces continuity
spaces, which we call $V$-valued metric spaces, and recounts the
portion of their theory used in this work. Section~\ref{sec:Metric-connectivity} is where the
metric connectedness formalism is introduced and investigated. Section~\ref{heir} develops a single mechanism which reproduces connectedness and uniform connectedness as special cases, and extends further to an entire hierarchy of notions of connectedness. Finally, Section~\ref{consequences} explores some of the consequences of the metric approach to classical results, shedding some new light upon them.  

\section{Continuity spaces\label{sec:Continuity-spaces}}

For the results we present we can employ either Flagg's formalism through value quantales (\cite{FlaggQuantales}), or Koppermans formalism of value semigroups (\cite{KoppermanAllTop}). We shall briefly present Flagg's approach (it is shown in \cite{WeissComparing} that the two approaches are equally powerful as frameworks for topology). Continuity spaces, as introduced by Flagg (\cite{FlaggQuantales}), are a generalization
of metric spaces where the codomain of the metric function is a value
quantale, rather than the particular value quantale $[0,\infty]$
of non-negative reals. A \emph{value quantale} is a complete lattice
$V$, with the bottom and top elements satisfying $0<\infty $, and meet denoted by
$\wedge$, together with an associative and commutative binary operation
$+$ such that the conditions
\begin{itemize}
\item $a+\bigwedge S=\bigwedge(a+S)$, for all $a\in V$ and $S\subseteq V$
(where $a+S=\{a+s\mid s\in S\}$)
\item $a+0=a$ for all $a\in V$
\item $a=\bigwedge\{b\in V\mid b\succ a\}$
\item $a\wedge b\succ0$ for all $a,b\in V$ with $a,b\succ0$
\end{itemize}
hold. Here the meaning of $a\prec b$, or $b\succ a$, is that $b$
is \emph{well above} $a$, i.e., that for all $S\subseteq V$, if
$a\ge\bigwedge S$, then there exists $s\in S$ with $b\ge s$. It
is straightforward that 
\begin{itemize}
\item if $a\prec b$, then $a\le b$
\item if $a\le b\prec c$ or $a\prec b\le c$, then $a\prec c$
\item $0=\bigwedge\{\varepsilon\in V\mid\varepsilon\succ0\}$
\item if $a>0$, then there exists $\varepsilon\succ0$ with $a\nleq\varepsilon$. 
\end{itemize}
For the proofs of the following two properties in a value quantale
refer, respectively, to \cite[Theorem 1.6, Theorem 2.9]{FlaggQuantales}. 
\begin{itemize}
\item If $a\prec c$, then there exists $b\in V$ with $a\prec b\prec c$.
\item If $\varepsilon\succ0$, then there exists $\delta\succ0$ with $2\delta\le\varepsilon$
(where $2\delta=\delta+\delta$).
\end{itemize}
A \emph{continuity space} or a \emph{$V$-valued metric space} is
a triple $(V,X,d)$ where $V$ is a value quantale, $X$ is a set,
and $d\colon X\times X\to V$ is a function satisfying $d(x,x)=0$
and $d(x,z)\le d(x,y)+d(y,z)$, for all $x,y,z\in X$. With every
$V$-valued metric space $X$ one may associate the \emph{open ball
topology}, the one generated by the \emph{open balls} $B_{\varepsilon}(x)=\{y\in X\mid d(x,y)\prec\varepsilon\}$,
where $x\in X$ and $\varepsilon\succ0$. Every topological space
is metrizable in the sense that if $(X,\tau)$ is a topological space,
then there exists a value quantale $V$ and a $V$-valued metric structure
on $X$, such that the induced open ball topology is precisely $\tau$
(\cite[Theorem 4.15]{FlaggQuantales}). 

In \cite{WeissTopMetEquiv} it is shown that this construction extends
functorially to an equivalence ${\bf Met_c}\to{\bf Top}$, as follows.
Given value quantales $V$ and $W$, and metric spaces $(V,X,d)$ and
$(W,Y,d)$, declare a function $f\colon X\to Y$ to be \emph{continuous} if for all $x\in X$ and $\varepsilon\succ0_{W}$, there exists $\delta\succ0_{V}$
such that $d(fx,fy)\le\varepsilon$ for all $y\in X$ with $d(x,y)\le\delta$.
${\bf Met_c}$ is then the category whose objects are $(V,X,d)$ where
$V$ ranges over all value quantales and $X$ is a $V$-valued metric
space. The morphisms in ${\bf Met_c}$ are the continuous functions.
Flagg's construction of the open ball topology extends functorially
to form $\mathcal{O}\colon {\bf Met_c}\to{\bf Top}$, with $\mathcal{O}(V,X,d)$
sent to $X$ with the open ball topology, and $\mathcal{O}(f)=f$
for all continuous functions $f$. It then holds that $\mathcal{O}$
is an equivalence of categories. In light of this equivalence, $V$-valued
metric spaces with their continuous functions can be taken as models
for topology. 

The following useful property is \cite[Theorem 4.6]{FlaggQuantales}.
Given $x\in X$ and $S\subseteq X$ in a $V$-valued metric space
$X$, let $d(x,S)=\bigwedge_{s\in S}d(x,s)$. With respect to the
open ball topology, a set $C\subseteq X$ is closed if, and only if,
$d(x,C)=0$ implies $x\in C$, for all $x\in X$. In particular, a point $x\in X$ belongs to the closure of $S\subseteq X$ if, and only if, $d(x,S)=0$. 

It is shown in \cite{WeissOnMetrizability} that the situation for uniform spaces is similar, in the following sense. With any $V$-valued metric space $(V,X,d)$ one may associate the quasi uniform space whose entourages are generated by the sets $E_\varepsilon = \{(x,y)\in X\times X \mid d(x,y)\prec \varepsilon \}$, with $\varepsilon \succ 0$. Obviously, if $d$ is symmetric, then one obtains a uniform space. Every  uniform space $(X,{\mathcal E})$ is metrizable in the sense that there exists a value quantale $V$ and a symmetric metric space $(V,X,d)$ whose associated uniformity is $\mathcal E$. 

Now, declare a function $f\colon X\to Y$ to be \emph{uniformly continuous} if for all $\varepsilon\succ0_{W}$ there exists $\delta\succ0_{V}$
such that $d(fx,fy)\le\varepsilon$ for all $x,y\in X$ with $d(x,y)\le\delta$. Let ${\bf Met_u}$ be the subcategory of ${\bf Met_c}$ consisting of all $V$-valued metric spaces but only the uniformly continuous functions. Further, let ${\bf sMet_u}$ be the full subcategory of ${\bf Met_u}$ spanned by the symmetric $V$-valued metric spaces. The metrization above is part of an equivalence of categories ${\bf sMet_u}\simeq {\bf Unif}$. 

We may thus take the following approach. Every $V$-valued metric space is
silently endowed with the open ball topology and the system of entourages as above (yielding a quasi-uniformity which is a uniformity if $d$ satisfies symmetry). Every topological
space is automatically assumed to come with a $V$-valued metric structure
inducing its topology, and every uniform space is assumed to be equipped with a symmetric $V$-valued metric structure inducing its entourages. If the context requires it, when speaking of uniformities, it will be understood that a $V$-valued metric space is symmetric. 

\begin{rem}
As shown in \cite{WeissOnMetrizability}, one can obtain the metrizability of all quasi-uniform spaces by adaptating the definition of value quantale, namely by dropping the assumption that $+$ is commutative, and adjoining to every axiom its twin with respect to addition. For instance, the requirement that $a+0=a$ is augmented with the requirement that $0+a=a$, and similarly for the other axioms involving $+$. All of the above remains true for these slightly more general structures. It is the case that every quasi-uniform space $(X,\mathcal E)$ is metrizable in the above sense as well and there is then an equivalence of categories ${\bf Met_u}\simeq {\bf QUnif}$. It should be noted that the proofs below do not make use of the commutativity of $+$ in $L$, and thus the results can be phrased for quasi-uniform spaces. We choose to frame the results in Flagg's original commutative variant of value quantales for two reasons; simplicity and that currently the author does not know if it is necessary to pass to non-commutative value quantales in order to obtain the metrizability of quasi-uniform spaces, or whether every quasi-uniformity is obtained as the quasi-uniformity induced by a $V$-valued metric for a commutative $V$. If the latter is the case, then all the results below hold verbatim for quasi-uniform spaces. 
\end{rem}

We close this section with the following convenient observation,  namely that the metrizability of each topological space individually can be strengthened to give a mutual metrizability theorem for a family of topological spaces. 

\begin{thm}\label{mutual}
Let $\{(X_i,\tau_i)\}_{i\in I}$ be a collection of topological spaces indexed by a set $I$. There exists a single value quantale $V$ and, for each $i\in I$, a metric structure $d_i\colon X_i\times X_i \to V$ whose induced open ball topology is $\tau_i$. 
\end{thm}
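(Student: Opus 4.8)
The plan is to reduce the mutual statement to the single-space metrization theorem (\cite[Theorem 4.15]{FlaggQuantales}) by metrizing the topological coproduct. First I would form the disjoint union $X=\coprod_{i\in I}X_i$, equipped with the coproduct topology $\tau$, in which each $X_i$ sits as an open (indeed clopen) subspace carrying precisely its original topology $\tau_i$. Applying Flagg's metrization to the single space $(X,\tau)$ produces one value quantale $V$ and a $V$-valued metric $d\colon X\times X\to V$ whose open ball topology is $\tau$. I would then simply restrict: set $d_i=d|_{X_i\times X_i}$. Each $d_i$ inherits $d_i(x,x)=0$ and the triangle inequality, so $(V,X_i,d_i)$ is a $V$-valued metric space over the \emph{same} quantale $V$, which is exactly what the statement demands.

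It remains to check that the open ball topology of $(V,X_i,d_i)$ is $\tau_i$, and the essential point is that restricting the metric to an open subspace does not disturb small balls. Since $X_i$ is open in $(X,\tau)$ and $\tau$ is the open ball topology of $d$, for each $x\in X_i$ there is some $\varepsilon_x\succ0$ with $B_{\varepsilon_x}(x)\subseteq X_i$. For every $\delta\succ0$ with $\delta\le\varepsilon_x$ one then has $B_\delta(x)\subseteq B_{\varepsilon_x}(x)\subseteq X_i$, using the monotonicity $a\prec\delta\le\varepsilon_x\Rightarrow a\prec\varepsilon_x$ recorded in the excerpt; consequently the ambient ball $B_\delta(x)=\{y\in X\mid d(x,y)\prec\delta\}$ coincides with the ball $\{y\in X_i\mid d_i(x,y)\prec\delta\}$ computed in $(X_i,d_i)$. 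Thus at every point of $X_i$ the $d_i$-balls of sufficiently small radius form a neighbourhood basis agreeing with the small ambient balls; as the latter are a neighbourhood basis for the subspace topology of $X_i$ in $(X,\tau)$, which is exactly $\tau_i$, the open ball topology of $d_i$ equals $\tau_i$.

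The argument is essentially forced once the coproduct is in play, so I do not expect a serious obstacle; the only point requiring care is the interaction between the restricted metric and the $\prec$ relation, which is handled by the monotonicity above together with the fact, built into the open ball topology, that balls form a basis. I would also remark on the more obvious alternative strategy, namely to metrize each $X_i$ separately as $(V_i,X_i,d_i')$ and then assemble a single quantale as the product $\prod_{i\in I}V_i$ with pointwise operations. This can be made to work, but it is less economical: one must first verify that an arbitrary product of value quantales is again a value quantale, and in an infinite product the relation $\succ0$ is genuinely subtle — an element is well above $0$ only when cofinitely many of its coordinates equal the top — so that verifying the axiom $a\wedge b\succ0$ and re-establishing the metrization both require tracking this phenomenon. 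The coproduct route sidesteps all of this by delegating the construction of $V$ back to \cite[Theorem 4.15]{FlaggQuantales}.
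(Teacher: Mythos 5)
Your proof is correct, but it takes a genuinely different route from the paper's. The paper does not pass through the coproduct at all: it opens up Flagg's metrization, recalling that a space $(X,\tau)$ is metrized in the value quantale $\Omega(\tau)$ of down-closed families of finite subsets of $\tau$ via $d(x,y)={\downarrow}\tau_{x\rightarrow y}$, and then observes that since ${\downarrow}T\in\Omega(S)$ for any $T\subseteq S$, one may simply enlarge the indexing set to $S=\bigcup_{i\in I}\tau_i$ and metrize every $X_i$ in the single quantale $\Omega(S)$ by the same formula. Your argument instead uses \cite[Theorem 4.15]{FlaggQuantales} as a black box on $\coprod_i X_i$ and then restricts, which is cleaner in that it never looks inside the construction, but it shifts the burden onto a lemma the paper never states: that the open ball topology of a restricted $V$-valued metric is the subspace topology. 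That lemma is true, but note that it needs slightly more than the monotonicity $a\prec\delta\le\varepsilon\Rightarrow a\prec\varepsilon$ you cite --- to find $\varepsilon_x\succ 0$ with $B_{\varepsilon_x}(x)\subseteq X_i$ at all, and to know that small $d_i$-balls centred at $x$ are a neighbourhood base for the $d_i$-ball topology, you are using Flagg's theorem that the open balls form a base in the strong sense that $y\in B_\varepsilon(x)$ implies $B_\delta(y)\subseteq B_\varepsilon(x)$ for some $\delta\succ 0$ (proved via interpolation of $\prec$ and $r=\bigwedge_{\delta\succ 0}(r+\delta)$), not merely that they generate the topology. With that fact granted, your proof closes. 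Your closing remark is also well taken and consonant with the paper: Section~\ref{heir} of the paper confronts exactly the failure of $a,b\succ 0\Rightarrow a\wedge b\succ 0$ in infinite products of value quantales, so avoiding the product route here is the right instinct; the paper's own fix (enlarging $S$ inside a single $\Omega(S)$) and yours (the coproduct) are two different ways of sidestepping the same obstruction.
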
 

\begin{proof}
Let us quickly recall Flagg's metrization of a topological space $(X,\tau)$. For any set $S$ let $\Omega(S)$ be the collection of all down closed families of finite subsets of $S$, ordered by reverse inclusion, and with intersection as addition, giving rise to a value quantale. For any $T\subseteq S$, the collection ${\downarrow}T$ of all finite subsets of $T$ always belongs to $\Omega (S)$. Taking $S=\tau$ allows one to define $d\colon X\times X\to \Omega (\tau)$ by $d(x,y)={\downarrow}\tau_{x\rightarrow y}$, where $\tau_{x\rightarrow y}=\{U\in \tau \mid x\in U \implies y\in U\}$, resulting in an $\Omega(\tau)$-valued metric space. It is then an exercise in deciphering the definitions to show that the induced open ball topology is the original topology $\tau$. For the set indexed collection $\{(X_i,\tau _i)\}$, let $S=\bigcup _{i\in I}\tau_i$. The adaptation to the above required in order to obtain a mutual metrization of the entire family valued in $\Omega(S)$ is minor. 
\end{proof}

\section{Metric connectedness}\label{sec:Metric-connectivity}

Given a $V$-valued metric space $X$, consider a function $R\colon X\to V$
with the only restriction being that $R(x)\succ0$ for all $x\in X$
(which we abbreviate to $R\succ 0$). Such a function $R$ will be called a \emph{scale}. An \emph{$R$-step} (or simply
a \emph{step}, if $R$ is understood) is an ordered pair $(x,y)\in X\times X$
such that at least one of the conditions
\begin{itemize}
\item $d(x,y)\prec R(x)$ 
\item $d(y,x)\prec R(y)$ 
\end{itemize}
holds. An \emph{$R$-walk} (or simply a \emph{walk}) is a finite sequence
$x_{1},\dots,x_{n}$ of points in $X$ such that $(x_{k},x_{k+1})$
is a step for all $1\le k<n$. Such a walk is said to \emph{$R$-connect}
(or simply to \emph{connect}) $x_{1}$ and $x_{n}$, which are then
said to be \emph{$R$-connected}, and we write $x_{1}\sim_{R}x_{n}$,
clearly an equivalence relation. $X$ is said to be \emph{$R$-connected} if any two of its points are $R$-connected. We say that two points $x,y\in X$
are \emph{connected} if $x\sim_{R}y$ for all scales $R\colon X\to V$,
and we then write $x\sim y$, again an equivalence relation (indeed,
${\sim}=\bigcap_{R\succ0}\sim_{R}$). Given $\varepsilon\succ 0$, one
may consider the constant function $R_{\varepsilon}:X\to V$, $x\mapsto\varepsilon$, which we call a \emph{uniform scale}.
We say that $X$ is \emph{$\varepsilon$-connected} if it is $R_{\varepsilon}$-connected. Two points $x,y\in X$ are \emph{uniformly connected} if $x\sim_{R} y$ for all uniform scales $R$. For any scale $R$ and $z\in X$, let $C_{z}^{R}=\{x\in X\mid z\sim_{R}x\}$,
the \emph{$R$-connected component} of $z$. Obviously, $z\in C_{z}^{R}$. 

\begin{defn}
A non-empty $V$-valued metric space $X$ is \emph{connected} (resp. \emph{uniformly connected}) if  $x$ and $y$ are connected (resp. uniformly connected) for all $x,y\in X$. 
\end{defn}
 
\begin{rem}\label{alterStep}
\label{rem:alternative}If a step is instead defined to be a pair
$(x,y)$ such that at least one of
\begin{itemize}
\item $d(x,y)\le R(x)$
\item $d(y,x)\le R(y)$
\end{itemize}
holds, with the rest of the concepts above unchanged, then the final
notion of connectedness is the same as above. To see that,
note first that $a\prec b$ implies $a\le b$, and thus any walk under
the preceding definition is also a walk under the new definition. For
the converse, note that for all $\varepsilon\succ0$ there is a $\delta$
with $0\prec\delta\prec\varepsilon$, and then if $a\le\delta$, then
$a\prec\varepsilon$. Given a scale $R\colon X\to V$ there is then a scale $R'\colon X\to V$
with $0\prec R'(x)\prec R(x)$, for all $x\in X$. It then follows
that any $R'$-walk under the new definition is an $R$-walk under the old
one. We will choose whichever alternative to use based on convenience. 
\end{rem}

Given a scale $R$ on a $V$-valued metric space $X$ and a subset $S\subseteq X$ we write $B_R(S)=\{y\in X\mid \exists s\in S\colon d(s,y)\prec R(s)\}$, and we abbreviate $B_R (\{x\})$ to $B_R(x)$ for all $x\in X$. Notice that a subset $U\subseteq X$ is open in the induced topology of $(V,X,d)$ precisely when there exists a scale $R$ such that $B_R(U)=U$. Consequently, we declare a subset $U\subseteq X$ to be \emph{uniformly open} when there exists a uniform scale $R$ with $B_R(U)=U$. $U$ is \emph{uniformly clopen} if both $U$ and $X\setminus U$ are uniformly open. 

\begin{rem}
If $d$ is symmetric, then any uniformly open set is automatically uniformly clopen. 
\end{rem}

\begin{rem}
It is easy to see that if $(X,\mathcal E)$ is a uniform space and $(V,X,d)$ is a symmetric $V$-valued metric space inducing the uniformity $\mathcal E$, then a subset $U\subseteq X$ is uniformly open in the metric sense (namely, $U=B_R(U)$ for some uniform scale $R$) if, and only if, it is uniformly open in the uniform sense (namely, there exists an entourage $E$ such that $E[x]\subseteq U$ for all $x\in U$). 
\end{rem}

\begin{prop}\label{AreClopen}
In a $V$-valued metric space $X$, given any scale (resp. uniform scale) $R$
and $z\in X$, the $R$-connected component $C_z^R$ is clopen (resp. uniformly clopen). 
\end{prop}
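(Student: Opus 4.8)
The plan is to exhibit, for a single scale $R$, an explicit witness to both the openness and the closedness of every $R$-connected component at once, using the operator $B_R$ together with the characterisation of open sets recalled just before the proposition (that $U$ is open iff $B_{R'}(U)=U$ for some scale $R'$, and uniformly open iff this holds for some uniform scale). First I would verify that $B_R(C_z^R)=C_z^R$, using $R$ itself. The inclusion $C_z^R\subseteq B_R(C_z^R)$ is immediate: for $x\in C_z^R$ one has $d(x,x)=0\prec R(x)$ since $R(x)\succ0$, so taking $s=x$ witnesses $x\in B_R(C_z^R)$. For the reverse inclusion, if $y\in B_R(C_z^R)$ then $d(s,y)\prec R(s)$ for some $s\in C_z^R$; this is precisely one of the two clauses defining an $R$-step, so $(s,y)$ is a step, whence $s\sim_R y$, and combining with $z\sim_R s$ and transitivity gives $y\in C_z^R$. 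By the recalled characterisation, $C_z^R$ is therefore open, and uniformly open when $R$ is a uniform scale, since then $R$ itself is the required uniform scale.

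The key point for closedness, and the one place requiring a little care, is that the same scale $R$ witnesses openness of every component simultaneously. Since $\sim_R$ is an equivalence relation, the components partition $X$, so $X\setminus C_z^R=\bigcup_{w\not\sim_R z}C_w^R$. Because $B_R$ manifestly commutes with arbitrary unions — a point $y$ lies in $B_R$ of a union exactly when some single $s$ in some member witnesses $d(s,y)\prec R(s)$ — and each $C_w^R$ satisfies $B_R(C_w^R)=C_w^R$ by the previous step, we obtain $B_R(X\setminus C_z^R)=X\setminus C_z^R$. Hence the complement is open as well, and uniformly open via the same uniform scale $R$ in the uniform case, so $C_z^R$ is clopen (resp.\ uniformly clopen).

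I would note explicitly that the reason to route closedness through the complement, rather than through the closed-set criterion $d(x,C)=0\Rightarrow x\in C$, is that the latter only delivers a point $s$ with $d(x,s)\le R(x)$, i.e.\ a step in the sense of Remark~\ref{alterStep} rather than with $\prec$; the complement argument sidesteps this mismatch entirely. The main obstacle is thus not any hard estimate but the bookkeeping ensuring a single scale serves the whole partition: the observation that $B_R$ distributes over unions is exactly what makes the one fixed scale $R$ suffice, which is indispensable in the uniform case where uniform openness demands a single uniform scale rather than merely the existence of some scale per set.
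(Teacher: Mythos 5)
Your proof is correct, and its first half (openness of $C_z^R$ via the first clause in the definition of a step, with $R$ itself as the witness) is exactly the paper's argument. For the complement you take a slightly different route: the paper applies the second step clause directly — if $y\notin C_z^R$ and $x\in B_{R(y)}(y)$, i.e.\ $d(y,x)\prec R(y)$, then $(x,y)$ is a step, so $x\notin C_z^R$, giving $B_{R(y)}(y)\subseteq X\setminus C_z^R$ — whereas you decompose $X\setminus C_z^R$ as the union of the remaining $\sim_R$-classes and invoke the distributivity of $B_R$ over unions together with the $B_R$-invariance of each class. Both arguments hinge on the point you rightly emphasise, that the single scale $R$ witnesses openness of every piece simultaneously (which is what makes the uniform case go through); your version trades the second step clause for the fact that $\sim_R$ is an equivalence relation, which itself rests on the symmetry built into the definition of a step, so nothing is gained or lost in generality. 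Your closing observation — that the closed-set criterion $d(x,C)=0\Rightarrow x\in C$ would only produce a point $s$ with $d(x,s)\le R(x)$, a step in the sense of Remark~\ref{alterStep} rather than a $\prec$-step for the fixed scale $R$ — is correct and is a good reason to argue via the complement, as the paper also does.
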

\begin{proof}
Using the first definition of step, the first condition implies that
if $x\in C_{z}^{R}$, then $B_{R(x)}(x)\subseteq C_{z}^{R}$, and
thus $C_{z}^{R}$ is open (resp. uniformly open). By the second step condition, if $y\in X\setminus C_{z}^{R}$,
then $B_{R(y)}(y)\subseteq X\setminus C_{z}^{R}$, and thus $X\setminus C_{z}^{R}$
is open (resp. uniformly open).
\end{proof}

The next result is a tautology.

\begin{prop}\label{cont}
Let $(V,X,d)$ and $(W,Y,d)$ be metric spaces, valued, respectively, in $V$ and in $W$. A function $f\colon X\to Y$ between the underlying sets is continuous (resp. uniformly continuous) if, and only if, for every scale (resp. uniform scale) $R$ on $Y$ there exists a scale (resp. uniform scale) $S$ on $X$ such that $d(fx,fx')\prec R(fx)$ for all $x,x'\in X$ with $d(x,x')\prec S(x)$. 
\end{prop}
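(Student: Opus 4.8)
The statement is, as the authors signal, essentially a matter of unwinding the definitions of (uniform) continuity and of a scale; the only genuine content is the bookkeeping needed to pass between the relation $\le$ appearing in the definition of continuity and the relation $\prec$ appearing in the statement. The plan is to prove the continuous case in full and then to observe that the uniform case is obtained almost verbatim by replacing every scale with a uniform (constant) scale and dropping the dependence on the base point.

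For the forward implication I would fix a scale $R$ on $Y$ and construct the required $S$ pointwise. For each $x\in X$ we have $R(fx)\succ 0_W$, so by the interpolation property of a value quantale there is some $\varepsilon_x$ with $0\prec\varepsilon_x\prec R(fx)$. Applying continuity of $f$ at the point $x$ with this $\varepsilon_x$ yields $\delta_x\succ 0_V$ such that $d(x,y)\le\delta_x$ implies $d(fx,fy)\le\varepsilon_x$. Setting $S(x)=\delta_x$ defines a scale, and whenever $d(x,x')\prec S(x)$ we have in particular $d(x,x')\le\delta_x$, whence $d(fx,fx')\le\varepsilon_x\prec R(fx)$; the mixed transitivity $a\le b\prec c\implies a\prec c$ then gives $d(fx,fx')\prec R(fx)$, as required.

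For the converse I would fix $x_0\in X$ and $\varepsilon\succ 0_W$ and feed the hypothesis the uniform scale $R_\varepsilon$ on $Y$. This produces a scale $S$ on $X$ with $d(x,x')\prec S(x)\implies d(fx,fx')\prec\varepsilon$ for all $x,x'$, and in particular with $x=x_0$. Since $S(x_0)\succ 0_V$, interpolation furnishes $\delta$ with $0\prec\delta\prec S(x_0)$, and then $d(x_0,y)\le\delta\prec S(x_0)$ forces $d(x_0,y)\prec S(x_0)$ (again by mixed transitivity), hence $d(fx_0,fy)\prec\varepsilon$ and so $d(fx_0,fy)\le\varepsilon$. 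This is precisely the continuity condition at $x_0$ for $\varepsilon$.

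The only point requiring any care — and hence the \emph{main obstacle}, modest as it is — is keeping the two order relations straight: one must choose the auxiliary $\varepsilon_x$ strictly below $R(fx)$ and the auxiliary $\delta$ strictly below $S(x_0)$, so that the conversions between $\prec$ and $\le$ go through via interpolation and the mixed transitivity laws already recorded for value quantales. For the uniform case the same two arguments apply with $R$ and $S$ taken constant, so that the choices of $\varepsilon_x$ and $\delta_x$ no longer depend on the point; this independence is exactly what makes the resulting $S$ a uniform scale, and uniform continuity is what is being used in place of continuity.
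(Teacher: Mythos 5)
Your proof is correct and is exactly the definitional unwinding the paper has in mind --- the paper in fact offers no proof at all, dismissing the result as a tautology, and your handling of the $\prec$ versus $\le$ bookkeeping via interpolation and the mixed transitivity laws supplies precisely the details being glossed over. The uniform case reduces to constant scales as you say, so nothing further is needed.
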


The following result shows that a continuous (resp. uniformly continuous) image of a connected (resp. uniformly connected) $V$-valued metric space is connected (resp. uniformly connected). 

\begin{prop}\label{ImageIsConn}
Let $f\colon (V,X,d)\to (W,Y,d)$ be a continuous (resp. uniformly continuous) function between metric spaces. If two points $x, x'\in X$ are connected (resp. uniformly connected), then so are $f(x)$ and $f(x')$. 
\end{prop}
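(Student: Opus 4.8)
The plan is to run a single argument that covers both the plain and the uniform cases simultaneously, transporting a scale \emph{backwards} along $f$ via Proposition~\ref{cont} and then pushing a walk \emph{forwards}. I will work throughout with the primary (strict, $\prec$) definition of step, so that every inequality matches the $\prec$ appearing in Proposition~\ref{cont}.

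First I would fix an arbitrary scale (resp. uniform scale) $R$ on $Y$. The goal is to produce an $R$-walk in $Y$ joining $f(x)$ and $f(x')$: this gives $f(x)\sim_R f(x')$, and since $R$ is arbitrary it yields $f(x)\sim f(x')$ (resp. uniform connectedness of $f(x)$ and $f(x')$). By continuity (resp. uniform continuity) and Proposition~\ref{cont}, there is a scale (resp. uniform scale) $S$ on $X$ such that $d(fw,fw')\prec R(fw)$ for all $w,w'\in X$ with $d(w,w')\prec S(w)$. Because $x$ and $x'$ are connected (resp. uniformly connected), they are in particular $S$-connected, so there exists an $S$-walk $x=x_1,\dots,x_n=x'$ in $X$.

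The heart of the proof is to check that $f(x_1),\dots,f(x_n)$ is an $R$-walk in $Y$, i.e. that each pair $(f(x_k),f(x_{k+1}))$ is an $R$-step. Fix $k$. Since $(x_k,x_{k+1})$ is an $S$-step, either $d(x_k,x_{k+1})\prec S(x_k)$ or $d(x_{k+1},x_k)\prec S(x_{k+1})$. In the first case, applying the inequality from Proposition~\ref{cont} with $w=x_k$ and $w'=x_{k+1}$ gives $d(f(x_k),f(x_{k+1}))\prec R(f(x_k))$, which is exactly the first clause defining an $R$-step. In the second case, applying it with $w=x_{k+1}$ and $w'=x_k$ gives $d(f(x_{k+1}),f(x_k))\prec R(f(x_{k+1}))$, which is the second clause. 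Either way $(f(x_k),f(x_{k+1}))$ is an $R$-step, so $f(x_1),\dots,f(x_n)$ is the required $R$-walk.

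The one point demanding care — and the reason the argument is not wholly automatic — is that $d$ need not be symmetric, so the two clauses in the definition of a step are genuinely distinct and cannot be collapsed. What makes everything go through is that the disjunction in the $R$-step condition on $Y$ mirrors the disjunction in the $S$-step condition on $X$: the one-directional estimate of Proposition~\ref{cont}, applied to a pair in its given and in its reversed order, supplies precisely the two clauses that are needed. Consequently no symmetry of $d$ is invoked, and the uniform case is obtained verbatim by reading ``uniform scale'' in place of ``scale'' everywhere, using the uniform-continuity half of Proposition~\ref{cont}.
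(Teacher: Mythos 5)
Your proof is correct and follows essentially the same route as the paper's: pull the scale $R$ on $Y$ back to a scale $S$ on $X$ via Proposition~\ref{cont}, take an $S$-walk from $x$ to $x'$, and push it forward to an $R$-walk. Your explicit case analysis of the two step clauses merely spells out what the paper leaves as an observation.
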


\begin{proof}
Let $R$ be a scale (resp. uniform scale) on $Y$. By Proposition~\ref{cont} there exists a scale (resp. uniform scale) $S$ on $X$ with $d(fx,fx')\prec R(fx)$ for all $x,x'\in X$ with $d(x,x')\prec S(x)$. We may now choose an $S$-walk in $X$ from $x$ to $x'$ and observe that this latter condition implies at once that the image of that walk is an $R$-walk in $Y$ from $f(x)$ to $f(x')$, which completes the proof. 
\end{proof}

We now establish the equivalency between the classical notion of connectedness of a topological space (i.e., a non-empty one having no non-trivial clopen sets) and the metric notion of connectedness, as well as the equivalency between the classical notion of connectedness of a uniform space (i.e., a non-empty one having no non-trivial uniformly clopen sets) and the notion of metric notion of uniform connectedness. 

\begin{thm}
Let $(V,X,d)$ be a non-empty $V$-valued metric space, $\mathcal{O}$
its associated open ball topology, and $\mathcal E$ its associated uniformity (if $d$ is symmetric). Then $(V,X,d)$ is connected (resp. uniformly connected) if,
and only if, $(X,\mathcal{O})$ is connected (resp. $(X,\mathcal E)$ is uniformly connected). 
\end{thm}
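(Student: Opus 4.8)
The plan is to prove the two biconditionals---the topological one and the uniform one---in parallel, each split into an easy direction that invokes Proposition~\ref{AreClopen} and a harder direction that builds a separating scale. Throughout I use the characterisation of open sets as those $U$ admitting a scale $R$ with $B_R(U)=U$ (and its uniform analogue), together with the remark identifying metric-sense uniform openness with uniform openness in the sense of entourages, so that ``$(X,\mathcal E)$ uniformly connected'' can be read as the absence of non-trivial metric-sense uniformly clopen sets.

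For the easy direction (classical connectedness implies the metric notion), suppose $(X,\mathcal O)$ has no non-trivial clopen sets. Fix an arbitrary scale $R$ and a point $z\in X$. By Proposition~\ref{AreClopen} the component $C_z^R$ is clopen, and it is non-empty since $z\in C_z^R$; hence $C_z^R=X$. As $R$ and $z$ were arbitrary, every pair of points is $R$-connected for every scale, so $x\sim y$ for all $x,y$ and $X$ is connected. The uniform variant is identical: for a uniform scale $R_\varepsilon$, Proposition~\ref{AreClopen} gives that $C_z^{R_\varepsilon}$ is uniformly clopen, hence all of $X$ when $(X,\mathcal E)$ is uniformly connected.

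For the harder direction (the metric notion implies classical connectedness) I argue by contraposition. Suppose $U$ is clopen with $\emptyset\ne U\ne X$, and pick $x\in U$ and $z\in X\setminus U$. Choose scales $R_1,R_2$ with $B_{R_1}(U)=U$ and $B_{R_2}(X\setminus U)=X\setminus U$, and glue them into a single scale $R$ by setting $R=R_1$ on $U$ and $R=R_2$ on $X\setminus U$; this is a legitimate scale since each piece is $\succ0$. The key claim is that $R$-steps cannot leave $U$: if $a\in U$ and $(a,b)$ is a step, then either $d(a,b)\prec R(a)=R_1(a)$, forcing $b\in B_{R_1}(U)=U$, or $d(b,a)\prec R(b)$, which would place $a$ inside $B_{R_2}(X\setminus U)=X\setminus U$ should $b$ lie outside $U$---contradicting $a\in U$. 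Thus no $R$-walk joins $x$ to $z$, contradicting $x\sim_R z$, so no non-trivial clopen set exists.

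The uniform case of the harder direction runs the same way, but here lies the main obstacle: the glued scale would be $\varepsilon_1$ on $U$ and $\varepsilon_2$ on $X\setminus U$, hence non-constant and not a uniform scale. Given uniform scales $R_{\varepsilon_1}$ and $R_{\varepsilon_2}$ witnessing uniform openness of $U$ and of $X\setminus U$, I instead form the single uniform scale $R_\varepsilon$ with $\varepsilon=\varepsilon_1\wedge\varepsilon_2$, which is legitimate because $\varepsilon_1,\varepsilon_2\succ0$ forces $\varepsilon\succ0$ by the value-quantale axiom that meets of positive elements are positive. Since $\varepsilon\le\varepsilon_i$, a distance $\prec\varepsilon$ is also $\prec\varepsilon_i$, so $B_{R_\varepsilon}(U)=U$ and $B_{R_\varepsilon}(X\setminus U)=X\setminus U$ simultaneously; the symmetry of $d$ then collapses the two step conditions into one and makes the confinement argument immediate, so $R_\varepsilon$-walks cannot cross the boundary, contradicting uniform connectedness. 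The crux in both halves is the production of a scale simultaneously adapted to $U$ and its complement---by pointwise gluing in the topological case and by the meet of positive elements in the uniform case.
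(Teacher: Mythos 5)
Your proof is correct and follows essentially the same route as the paper's: the easy direction via Proposition~\ref{AreClopen} and the clopenness of $C_z^R$, and the hard direction by producing a single scale $R$ with $B_R(U)=U$ and $B_R(X\setminus U)=X\setminus U$ and showing no step can cross the boundary. Your only addition is to make explicit the construction of that separating scale (pointwise gluing in the topological case, the meet $\varepsilon_1\wedge\varepsilon_2$ in the uniform case), a detail the paper's proof merely asserts.
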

\begin{proof}
Assume that $(X,\mathcal{O})$ is connected (resp. $(X,\mathcal E)$ is uniformly connected) and let $R$ be a scale (resp. uniform scale) on $X$. For an arbitrary $x\in X$ the $R$-connected
component $C_{x}^{R}$ is clopen (resp. uniformly clopen) and thus $C_{x}^{R}=X$, completing the proof.

Assume now that $(X,\mathcal{O})$ is not connected (resp. $(X,\mathcal E)$ is not uniformly connected) and let $C\subseteq X$
be a non-trivial clopen (resp. uniformly clopen) subset. There exists then a scale (resp. uniform scale) $R$ on $X$ such that $B_R(C)=C$ and $B_R(\hat C)=\hat C$, where $\hat C = X\setminus C$. We claim that this scale demonstrates a separation between any point in $C$ and any point in $\hat C$. Indeed, if a walk existed connecting $C$ to $\hat C$, then it must posses a single step $(x,y)$ with $x\in C$ and $y\notin C$. However, $d(x,y)\prec R(x)$ contradicts $B_R(x)\subseteq C$ while $d(y,x)\prec R(y)$ contradicts $B_R(\hat C)\subset \hat C$. 
\end{proof}
\begin{defn}
Let $X$ be a $V$-valued metric space and let $z\in X$. The \emph{connected
component} of $x$ is the set $C_{z}=\{x\in X\mid z\sim x\}$. 
\end{defn}
Noting that $C_{z}=\bigcap_{R\succ0}C_{z}^{R}$, and seeing that $C_{z}^{R}$
is clopen, it follows immediately that $C_{z}$ is closed (but not
necessarily open). 
\begin{rem}
It is interesting to compare and contrast the notions of $R$-connectedness and path-connectedness, and to note that some of the qualitative differences
between $R$-connectedness and connectedness (e.g., that  $R$-connected
components are clopen while connected components are closed) are attributed to the extra quantification over $R\succ 0$
required for connectedness. Numerous (and perhaps all) of the fundamental
facts about connectedness can be proven using the metric formalism
presented above. Sometimes the proof is shorter, other times it may
be a bit longer, but always the proof employs arguments strongly reminiscent
of path connectedness arguments, and the proof is, arguably, closer to one's natural
intuition of connectedness. Examples and further discussion are given below. 
\end{rem}

As stated in the introduction, for ordinary metric spaces, i.e., for
symmetric $[0,\infty]$-valued metric spaces, connectedness implies
uniform connectedness, and under compactness the two notions coincide.
We now establish the analogous result for general $V$-valued metric
spaces, and thus for all topological spaces. 

\begin{thm}\label{unifConnPlusCompIsConn}
Let $X$ be a non-empty $V$-valued metric space. If $X$ is connected, then it is uniformly connected, and the converse holds if $X$ is compact.
\end{thm}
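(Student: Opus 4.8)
The plan is to prove the two implications separately. For the forward direction, suppose $X$ is connected; I want to show it is uniformly connected, i.e., that any two points $x,z\in X$ are $R_\varepsilon$-connected for every $\varepsilon\succ 0$. The key observation is that a uniform scale is itself a scale, so connectedness (quantified over \emph{all} scales) is \emph{a priori} a stronger condition than uniform connectedness (quantified only over the uniform scales). Concretely, fix $\varepsilon\succ 0$ and consider the constant scale $R_\varepsilon$. Since $x\sim z$ means $x\sim_R z$ for every scale $R$, in particular $x\sim_{R_\varepsilon} z$, which is exactly $\varepsilon$-connectedness. Thus this direction is essentially immediate from the definitions, observing that $\{R_\varepsilon\mid \varepsilon\succ 0\}\subseteq\{R\mid R\succ 0\}$, and requires no compactness.

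For the converse, I would assume $X$ is compact and uniformly connected, and aim to show it is connected, i.e., that $x\sim_R z$ for an \emph{arbitrary} scale $R\succ 0$. The strategy is to dominate the arbitrary scale $R$ from below by a single uniform scale $R_\delta$, so that every $R_\delta$-walk is automatically an $R$-walk. Using Remark~\ref{alterStep} I may work with the $\le$-version of steps at my convenience. For each $x\in X$ pick $\varepsilon_x\succ 0$ with $\varepsilon_x\prec R(x)$ (possible since $R(x)\succ 0$ and one can interpolate strictly below); refining once more, choose $\delta_x\succ 0$ with $2\delta_x\le\varepsilon_x$, using the halving property of value quantales stated in the excerpt. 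The open balls $\{B_{\delta_x}(x)\}_{x\in X}$ form an open cover of $X$, so by compactness finitely many $B_{\delta_{x_1}}(x_1),\dots,B_{\delta_{x_m}}(x_m)$ cover $X$. Setting $\delta=\delta_{x_1}\wedge\cdots\wedge\delta_{x_m}$, the finite-meet axiom guarantees $\delta\succ 0$, so $R_\delta$ is a genuine uniform scale.

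The heart of the argument is then to verify that any $R_\delta$-step is an $R$-step, so that $R_\delta$-connectedness upgrades to $R_\delta$-walks being $R$-walks. Suppose $(y,y')$ is an $R_\delta$-step, say $d(y,y')\le\delta$ (the other alternative being symmetric in form). The point $y$ lies in some $B_{\delta_{x_j}}(x_j)$, so $d(x_j,y)\prec\delta_{x_j}$; combining via the triangle inequality with $d(y,y')\le\delta\le\delta_{x_j}$ gives $d(x_j,y')\le d(x_j,y)+d(y,y')\le 2\delta_{x_j}\le\varepsilon_{x_j}\prec R(x_j)$. This places $y'\in B_{R(x_j)}(x_j)$, but that alone does not immediately produce an $R$-step between $y$ and $y'$; the genuine difficulty is that $R$ varies from point to point, and I must land the estimate on $R(y)$ or $R(y')$ rather than on $R(x_j)$. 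The main obstacle is therefore this book-keeping across the varying scale: I expect to resolve it by choosing the $\delta_x$ so that the entire ball $B_{\delta_x}(x)$ is controlled by $R$ at \emph{its own centre region}—more precisely, arranging via the covering that membership in $B_{\delta_{x_j}}(x_j)$ forces $R(x_j)\le R(y)$ up to the well-above relation, or alternatively refining the cover so that $\delta_{x_j}\prec R(y)$ for every $y\in B_{\delta_{x_j}}(x_j)$. Once an $R_\delta$-walk is shown to be an $R$-walk, uniform connectedness supplies an $R_\delta$-walk between any $x$ and $z$, and the proof concludes that $x\sim_R z$ for the arbitrary $R$, hence $x\sim z$.
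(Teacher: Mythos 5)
Your forward direction is correct and is exactly the paper's: a uniform scale is a scale, so connectedness trivially implies uniform connectedness. Your converse sets up the right compactness machinery (pointwise halving $2\delta_x\le\varepsilon_x\prec R(x)$, the cover by balls $B_{\delta_x}(x)$, a finite subcover with centres $x_1,\dots,x_m$, and $\delta=\delta_{x_1}\wedge\cdots\wedge\delta_{x_m}\succ 0$), and your estimate $d(x_j,y')\le d(x_j,y)+d(y,y')\le 2\delta_{x_j}\le R(x_j)$ is precisely the computation the paper performs. But the proof then stalls at exactly the point you flag, and neither of the repairs you sketch can work: for an arbitrary scale $R$ there is no semicontinuity, so $R$ may take values arbitrarily close to $0$ on any ball $B_{\delta_{x_j}}(x_j)$; hence you cannot arrange $R(x_j)\le R(y)$ for all $y$ in the ball, nor choose $\delta_{x_j}\prec R(y)$ uniformly over the ball. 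The attempt to show that every $R_\delta$-step is an $R$-step is doomed for this reason.

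The missing idea is that you do not need an $R_\delta$-step to be an $R$-step; you only need every $R_\delta$-walk to be refinable into an $R$-walk with the same endpoints, since walks are closed under concatenation. Given an $R_\delta$-step $(y,y')$ with $d(y,y')\le\delta$ and $y\in B_{\delta_{x_j}}(x_j)$, replace it by the two-step walk $(y,x_j,y')$. The pair $(y,x_j)$ is an $R$-step because $d(x_j,y)\prec\delta_{x_j}\le R(x_j)$ (the second alternative in the definition of step, anchored at $x_j$), and $(x_j,y')$ is an $R$-step because $d(x_j,y')\le 2\delta_{x_j}\le R(x_j)$ (the first alternative, again anchored at $x_j$, using the $\le$-version of step from Remark~\ref{alterStep}). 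Every estimate now lands on $R(x_j)$, which is exactly the quantity your construction controls, and the varying-scale book-keeping you were worried about disappears. Uniform connectedness then supplies an $R_\delta$-walk between the given points, and splitting each of its steps as above yields an $R$-walk, completing the converse.
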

\begin{proof}
If $X$ is connected, then it is $R$-connected for all $R\succ 0$,
certainly then also for all $R_{\varepsilon}$, $\varepsilon\succ 0$.
Assume now that $X$ is compact and uniformly connected. Let $R\succ 0$
be given. For each $x\in X$ let $R'(x)\succ 0$ satisfying $2R'(x)\le R(x)$,
and consider the open covering $\{B_{R'(x)}(x)\mid x\in X\}$, from
which we may extract a finite subcovering, say with centres $x_{1},\dots,x_{n}$,
and let $\varepsilon=R'(x_{1})\wedge\dots\wedge R'(x_{k})$. Suppose
now that $d(x,y)\le\varepsilon$ (cf. Remark~\ref{rem:alternative}).
There is then $1\le k\le n$ with $x\in B_{R'(x_{k})}(x_{k})$. We
then have $d(x_{k},x)\prec R'(x_{k})$, and thus $d(x_{k},y)\le d(x_{k},x)+d(x,y)\le R'(x_{k})+R'(x_{k})\le R(x_{k})$.
It now follows that $(x,x_{k},y)$ is an $R$-walk. A similar observation
holds if $d(y,x)\le\varepsilon$. We thus showed that any $\varepsilon$-step
$(x,y)$ may be split into a $2$-step $R$-walk, and thus any $\varepsilon$-walk
can be refined to an $R$-walk without altering the end points. As
$X$ is assumed uniformly connected, it follows that it is connected. 
\end{proof}

Let us illustrate the different flavour that this result, and the formalism we are presenting, gives to classical results. This relationship between connectedness and uniform connectedness holds not just for metric spaces. The following theorem is well known, but the proof we provide shows that the result is truly just a reflection of the familiar metric situation, taking advantage of the fact that through the concept of continuity spaces the models for topology and for (quasi) uniform spaces are the same (i.e., the objects in the categories are the same, namely $V$-valued metric spaces, and only the morphisms change). Suppose that a metric space $(V,X,d)$ and a uniformity $\mathcal E$ on $X$ are compatible in the sense that $d$ induces $\mathcal E$. It is then easy to see that the topology induced by $\mathcal E$ coincides with the open ball topology induced by $d$. 

\begin{thm}
Let $(X,\mathcal E)$ be a uniform space and $(X,\mathcal O)$ its induced topology. If $(X,\mathcal O)$ is connected, then so is $(X,\mathcal E)$, and the converse holds if $(X,\mathcal O)$ is compact. 
\end{thm}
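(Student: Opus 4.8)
The plan is to reduce this theorem about uniform spaces to the already-established metric Theorem~\ref{unifConnPlusCompIsConn}, exploiting the fact that in the continuity-space framework a uniform space and its induced topological space are \emph{the same underlying $V$-valued metric space} (only the chosen class of morphisms, and hence the chosen class of scales, differs). Concretely, by the metrization results recalled in Section~\ref{sec:Continuity-spaces}, the uniformity $\mathcal E$ is induced by some symmetric $V$-valued metric $d$ on $X$; fix such a $d$. By the observation preceding the statement, the open ball topology $\mathcal O$ of $(V,X,d)$ coincides with the topology induced by $\mathcal E$, so there is no ambiguity in speaking of ``$(X,\mathcal O)$''.

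First I would invoke the earlier theorem identifying the classical notions with the metric ones: $(X,\mathcal O)$ is connected (in the classical, no-nontrivial-clopen sense) if and only if $(V,X,d)$ is \emph{connected} in the metric sense (quantifying over all scales $R\succ 0$), and $(X,\mathcal E)$ is uniformly connected (no nontrivial uniformly clopen sets) if and only if $(V,X,d)$ is \emph{uniformly connected} in the metric sense (quantifying only over uniform scales $R_\varepsilon$). These translations are exactly what the theorem immediately following the definition of connectedness provides, once one notes that the uniformly clopen sets of $(X,\mathcal E)$ match the metric uniformly clopen sets of $(V,X,d)$, as recorded in the remark about entourages and uniform scales.

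With both classical conditions rephrased metrically, the theorem becomes a verbatim restatement of Theorem~\ref{unifConnPlusCompIsConn} applied to the single space $(V,X,d)$: metric connectedness implies metric uniform connectedness unconditionally, and the converse holds when $X$ is compact. Compactness is a property of the topology $\mathcal O$, which is common to both structures, so the compactness hypothesis transfers directly. I would therefore simply apply Theorem~\ref{unifConnPlusCompIsConn} and then translate back through the equivalences.

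The only genuine point requiring care—and the step I expect to carry the real content—is ensuring the dictionary between the two senses of ``uniformly clopen'' is airtight, so that uniform connectedness of $(X,\mathcal E)$ really does correspond to the metric uniform connectedness fed into Theorem~\ref{unifConnPlusCompIsConn}. For the topological side the correspondence is immediate, but on the uniform side one must confirm that a nontrivial entourage-clopen set produces a nontrivial metric uniformly clopen set and conversely; this is precisely the content of the remark relating $B_R(U)=U$ for a uniform scale $R$ to the existence of an entourage $E$ with $E[x]\subseteq U$. Once that equivalence is in hand, the proof is a one-line reduction, which is exactly the point the author wishes to emphasise: the result is no more than a shadow of the familiar metric fact, now legible because topological and uniform spaces share a common metric model.
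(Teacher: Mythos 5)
Your proposal is correct and follows exactly the paper's route: metrize $\mathcal E$ by a symmetric $(V,X,d)$, translate both classical conditions into the metric notions via the earlier equivalence theorem, and apply Theorem~\ref{unifConnPlusCompIsConn}. The extra care you take with the dictionary between entourage-clopen and metric uniformly clopen sets is a sound elaboration of what the paper leaves implicit.
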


\begin{proof}
Let $(V,X,d)$ be a metric space inducing the uniformity $\mathcal E$. If $(X,\mathcal O)$ is connected, then so is $(V,X,d)$, which is thus uniformly connected, implying that $(X,\mathcal E)$ is uniformly connected. If $(X,\mathcal O)$ is compact, i.e., $(V,X,d)$ is compact, then if $(X,\mathcal E)$ is uniformly connected, then $(V,X,d)$ is uniformly connected, and thus connected, implying that $(X,\mathcal O)$ connected. 
\end{proof}

\section {Hierarchies of connectedness}\label{heir}

Inspection of the above results and proofs reveals that a portion of the above is completely determined just by the choice of which scale functions one allows. Some of the proofs then are a formal consequence of that choice. We distill the formal aspects in this section.

Let $\bf Met_{all}$ be the category whose objects $(V,X,d)$ are all $V$-valued metric spaces (where $V$ varies over all value quantales) and whose morphisms $f\colon (V,X,d)\to (W,Y,d)$ are all functions $f\colon X\to Y$ between the underlying sets, with composition and identities given as in $\bf Set$. A \emph{scale system} $\Sigma$ is a specification, for each object $X=(V,X,d)$, of a non-empty set $\Sigma_X$ of scales on $X$, called \emph{$\Sigma$-scales}. 

\begin{exa}
Examples of scale systems include the following. 
\begin{itemize}
\item The collection $\Sigma_a$ of all scales. 
\item The collection $\Sigma_0$ of all scales $R\colon X\to V$ such that there exists $\varepsilon \succ 0$ with $R(x)\ge \varepsilon$, for all $x\in X$. 
\item The collection $\Sigma_u$ of all uniform scales. 
\item Fix, in each value quantale $V$, an element $\varepsilon_V\succ 0$, and denote the entire selection by ${\bf g} $. The scale system $\Sigma_{\bf g}$ consists, for each metric space $(V,X,d)$, of all scales $R\colon X\to V$ with $R(x)\ge \varepsilon_V$, for all $x\in X$. 
\item For a given metric space $(V,X,d)$, a function $\alpha \colon V\to V$, and a scale $R$ on $X$ say that a point $x\in X$ is a \emph{point of reference} for $R$ relative to $\alpha$ if $R(y)\ge \alpha(d(x,y))$, for all $y\in X$. We then say that $R$ has \emph{expansion rate} greater than $\alpha$. Fixing a function $\alpha \colon V\to V$ for each value quantale $V$, and denoting the entire selection by $\bf e$, the scale system $\Sigma_{\bf e}$ consists, for each metric space $(V,X,d)$, of all scales $R$ on $X$ with expansion rate greater than $\alpha$. 
\end{itemize}
\end{exa}

Let us fix a scale system $\Sigma$.

\begin{defn}
A morphism $f\colon (V,X,d)\to (W,Y,d)$ 
in $\bf Met_{all}$ is \emph{$\Sigma$-continuous} 
if for every $\Sigma$-scale $R$ on $Y$ there exists a $\Sigma$-scale $S$ 
on X such that $d(fx,fx')\prec R(fx)$ for all $x,x'\in X$ with $d(x,x')\prec S(x)$.
\end{defn} 
The following observation is trivial. 
\begin{prop}
Restricting $\bf Met_{all}$ to the $\Sigma$-continuous functions yields a category, denoted by $\bf Met_\Sigma$.
\end{prop}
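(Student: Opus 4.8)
The plan is to recognise that $\mathbf{Met_\Sigma}$ is intended to be a subcategory of $\mathbf{Met_{all}}$ having the same objects, with morphisms the $\Sigma$-continuous functions and with composition and identities inherited from $\mathbf{Met_{all}}$ (ultimately from $\mathbf{Set}$). Associativity and the unit laws are conditions on the underlying functions alone, and they already hold in $\mathbf{Met_{all}}$, so they are automatically inherited. Consequently the only two things I would need to verify are that every identity morphism is $\Sigma$-continuous and that the composite of two $\Sigma$-continuous morphisms is again $\Sigma$-continuous.

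For identities, I would consider $\mathrm{id}_X\colon (V,X,d)\to (V,X,d)$ and let $R$ be an arbitrary $\Sigma$-scale on $X$. Taking $S=R$, which is a $\Sigma$-scale on $X$, the condition $d(x,x')\prec S(x)=R(x)$ trivially yields $d(\mathrm{id}\,x,\mathrm{id}\,x')\prec R(\mathrm{id}\,x)$, so $\mathrm{id}_X$ is $\Sigma$-continuous.

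For composition, I would let $f\colon (V,X,d)\to (W,Y,d)$ and $g\colon (W,Y,d)\to (U,Z,d)$ be $\Sigma$-continuous and fix a $\Sigma$-scale $R$ on $Z$. First I would use the $\Sigma$-continuity of $g$ to obtain a $\Sigma$-scale $T$ on $Y$ with $d(gy,gy')\prec R(gy)$ whenever $d(y,y')\prec T(y)$, and then feed $T$ into the $\Sigma$-continuity of $f$ to obtain a $\Sigma$-scale $S$ on $X$ with $d(fx,fx')\prec T(fx)$ whenever $d(x,x')\prec S(x)$. The claim is that $S$ witnesses the $\Sigma$-continuity of $g\circ f$: if $d(x,x')\prec S(x)$, then $d(fx,fx')\prec T(fx)$, and applying the defining property of $T$ to the points $y=fx$ and $y'=fx'$ gives $d(g(fx),g(fx'))\prec R(g(fx))$, i.e.\ $d((g\circ f)x,(g\circ f)x')\prec R((g\circ f)x)$.

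The only point requiring care — and the reason the author deems the result trivial — is the bookkeeping of which space (and which value quantale among $V$, $W$, $U$) each scale inhabits; in particular the chaining uses the output inequality of $T$ as the input inequality of the next stage evaluated precisely at the point $fx$, so no appeal to the transitivity of $\prec$ is even needed. One may also note that the non-emptiness clause in the definition of a scale system is not actually invoked here, since the witnessing scales $S$ and $T$ are supplied directly by the $\Sigma$-continuity hypotheses. Hence $\mathbf{Met_\Sigma}$ is closed under identities and composition, and is a category.
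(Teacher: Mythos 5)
Your verification is correct and is exactly the routine check the author had in mind: the paper labels the proposition trivial and omits the proof entirely, and your argument (identities witnessed by $S=R$, composites by chaining the two witnessing scales at the point $fx$) supplies precisely the missing details. No issues.
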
 

$\Sigma_a$-continuity is precisely ordinary continuity, and both $\Sigma_0$-continuity and $\Sigma_u$-continuity give the notion of uniform continuity in the usual sense. $\Sigma_{\bf g}$-continuity is a weaker notion than ordinary continuity as it allows functions with small jump discontinuities (small is determined by the choice of $\bf g$) to be considered $\Sigma_{\bf g}$-continuous. In particular, generally speaking, $\Sigma$-continuity need not imply continuity. $\Sigma_{\bf e}$-continuous functions include all continuous functions as well as ones with large jump discontinuities as long as their global behaviour is tame relative to the choice of $\bf e$. 

\begin{defn}
Let $(V,X,d)$ be an object in $\bf Met_{all}$. A subset $A\subseteq X$ is \emph{$\Sigma$-open} if there exists a $\Sigma$-scale $R$ on $X$ such that $A=B_R(A)$. The set $A$ is \emph{$\Sigma$-closed} if $X\setminus A$ is $\Sigma$-open, and finally $A$ is \emph{$\Sigma$-clopen} if it is both $\Sigma$-open and $\Sigma$-closed.  
\end{defn}

\begin{rem}
It is easily seen that any $\Sigma$-continuous 
function $f\colon (V,X,d)\to (W,Y,d)$ has the property that the inverse 
image $f^{\leftarrow}(U)$ 
is $\Sigma$-open whenever $U\subseteq Y$ is $\Sigma$-open. 
The converse however need not hold. For instance, consider the scale system $\Sigma=\Sigma_u$ consisting of all uniform scales. Then the $\Sigma$-open sets are the uniformly open ones, and those are automatically uniformly clopen, provided $d$ is symmetric. As noted above, the $\Sigma$-continuous functions are the uniformly continuous ones. 
Let $([0,\infty ], \mathbb Q, d)$ be the usual metric structure on the rationals, $d(x,y)=|x-y|$. $\mathbb Q$ is then 
uniformly connected, so its clopen sets are trivial and thus are preserved under the inverse image of every $f\colon \mathbb Q \to \mathbb Q$. However, obviously, not all such functions are uniformly continuous. 
\end{rem}

The proof of the following result is trivial. 

\begin{prop}
Let $(V,X,d)$ be a $V$-valued metric space. Every $\Sigma$-open  (resp. $\Sigma$-closed, $\Sigma$-clopen) set is open (resp. closed, clopen) in the open ball topology. The sets $\emptyset$ and $X$ are always $\Sigma$-clopen.
\end{prop}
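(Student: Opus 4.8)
The plan is to reduce everything to the open-set characterisation recalled in Section~\ref{sec:Continuity-spaces}, namely that a subset $U\subseteq X$ is open in the open ball topology if, and only if, $U=B_R(U)$ for some scale $R$. The single observation driving the whole argument is that, by definition of a scale system, every $\Sigma$-scale is in particular a scale; hence any witness to $\Sigma$-openness is automatically a witness to openness in the sense of that characterisation.

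First I would treat the $\Sigma$-open case. Suppose $A$ is $\Sigma$-open, so that $A=B_R(A)$ for some $\Sigma$-scale $R$. Since $R$ is a scale, the stated characterisation gives at once that $A$ is open in the open ball topology. The $\Sigma$-closed case then follows by complementation: if $A$ is $\Sigma$-closed, then $X\setminus A$ is $\Sigma$-open, hence open by what was just shown, so $A$ is closed. The $\Sigma$-clopen case is merely the conjunction of these two, and no further work is needed.

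For the final claim I would show that $\emptyset$ and $X$ are $\Sigma$-open, whence $\Sigma$-clopen, since each is the complement of the other. Here I would first fix any $\Sigma$-scale $R$, which is possible precisely because $\Sigma_X$ is required to be non-empty. Then $B_R(\emptyset)=\emptyset$ trivially, there being no $s\in\emptyset$ to test, so $\emptyset$ is $\Sigma$-open. For $X$, the only thing to check is $B_R(X)=X$: given $y\in X$, taking $s=y$ yields $d(y,y)=0\prec R(y)$ because $R(y)\succ 0$, so $y\in B_R(X)$; the reverse inclusion is automatic. Thus $X=B_R(X)$ is $\Sigma$-open, and both sets are $\Sigma$-clopen.

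The only step that is not a pure unwinding of definitions—and the place where the hypotheses are genuinely used—is the verification that $X$ is $\Sigma$-open: it relies on the non-emptiness of $\Sigma_X$ to furnish a scale at all, and on the defining inequality $R(x)\succ 0$ together with $d(x,x)=0$ to place every point of $X$ in $B_R(X)$. Everything else is immediate from the open-set characterisation, which justifies the author's remark that the proof is trivial.
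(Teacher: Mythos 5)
Your proof is correct and is exactly the definition-unwinding the paper has in mind when it declares the result trivial: every $\Sigma$-scale is a scale, so the witness to $\Sigma$-openness is a witness to openness via the characterisation $U=B_R(U)$, and the non-emptiness of $\Sigma_X$ together with $d(x,x)=0\prec R(x)$ handles $\emptyset$ and $X$. No gaps.
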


$\emptyset $ and $X$ are called the \emph{trivial} $\Sigma$-clopen subsets of $X$. 

We now turn to relate these notions to $\Sigma$-connectedness. We note that Remark~\ref{alterStep} does not hold in the context of an arbitrary scale system $\Sigma$. We thus always use the definition of step utilising $\succ $ (this detail crops up in several places below). 

\begin{prop}\label{isClopen}
For any metric space $(V,X,d)$, a $\Sigma$-scale $R$, and $x\in X$, the connected component $C_x^R$ is $\Sigma$-clopen. 
\end{prop}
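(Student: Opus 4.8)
The plan is to follow the template of the proof of Proposition~\ref{AreClopen}, establishing separately that $C_x^R$ is $\Sigma$-open and that its complement is $\Sigma$-open. The essential new feature — and the reason the preceding remark insists on the $\succ$-definition of step — is that for an arbitrary scale system $\Sigma$ we are \emph{not} free to replace $R$ by a strictly smaller scale $R'$, since such an $R'$ need not itself be a $\Sigma$-scale. I must therefore exhibit the very scale $R$ as a witness for both $\Sigma$-openness conditions. The $\succ$-definition makes this possible: for a single point $s$ one has $B_R(\{s\})=\{y\mid d(s,y)\prec R(s)\}$, so the assertion $y\in B_R(S)$ says precisely that some $s\in S$ forms a step $(s,y)$.

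For the $\Sigma$-openness of $C_x^R$, I would verify $B_R(C_x^R)=C_x^R$. The inclusion $C_x^R\subseteq B_R(C_x^R)$ is immediate, since $d(s,s)=0\prec R(s)$ for every $s\in C_x^R$ (as $R(s)\succ0$). For the reverse inclusion, if $y\in B_R(C_x^R)$ then $d(s,y)\prec R(s)$ for some $s\in C_x^R$, so $(s,y)$ is a step and $s\sim_R y$; combined with $x\sim_R s$ this yields $y\in C_x^R$. Since $R$ is a $\Sigma$-scale, this establishes that $C_x^R$ is $\Sigma$-open.

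The argument for the complement $\hat C=X\setminus C_x^R$ is entirely symmetric and again uses the same $R$. The inclusion $\hat C\subseteq B_R(\hat C)$ holds for the same reason as before. If $y\in B_R(\hat C)$, then $(s,y)$ is a step for some $s\in\hat C$, so $s\sim_R y$; were $y\in C_x^R$ we would obtain $s\in C_x^R$, a contradiction, hence $y\in\hat C$. Thus $B_R(\hat C)=\hat C$, so $\hat C$ is $\Sigma$-open and $C_x^R$ is $\Sigma$-closed, completing the proof that $C_x^R$ is $\Sigma$-clopen. The only point requiring care — and the genuine difference from Proposition~\ref{AreClopen} — is the insistence on using $R$ itself throughout rather than a perturbation of it, which the $\succ$-definition of step accommodates precisely.
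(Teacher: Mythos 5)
Your proof is correct and follows essentially the same route as the paper's: both exhibit the given $\Sigma$-scale $R$ itself as the witness that $B_R(C_x^R)=C_x^R$ and $B_R(X\setminus C_x^R)=X\setminus C_x^R$, using the two halves of the $\succ$-definition of step (your ``first condition for $(s,y)$'' is the paper's ``second condition for $(y,s)$''). Your explicit remark that one cannot shrink $R$ because the perturbation need not remain a $\Sigma$-scale is exactly the point the paper makes just before the proposition.
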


\begin{proof}
Let $t\in X$. If $t\in C_x^R$, then there exists a walk from $x$ to $t$. By the first condition in the definition of step, extending this walk by one more step allows one to get to any point in $B_R(x)$, and thus $B_R(t)\subseteq C_x^R$, showing that $B_R(C_x^R)=C_x^R$, which is thus $\Sigma$-open. If $t\notin C_x^R$, then no point in $B_R(t)$ can be connected to $x$, since by the second condition in the definition of step any such walk can be extended by one more step to a walk from $x$ to $t$. Thus $B_R(X\setminus C_x^R)=X\setminus C_x^R$, which is thus $\Sigma$-open.  
\end{proof}

\begin{defn}
A non-empty $V$-valued metric space $(V,X,d)$ is \emph{$\Sigma$-connected} if $X$ is $R$-connected for all $\Sigma$-scales $R$ on $(V,X,d)$. The \emph {$\Sigma$-connected component} of $x\in X$ is the set $C_x^\Sigma = \bigcap C_x^R$, where $R$ ranges over all $\Sigma$-scales on $(V,X,d)$. 
\end{defn}

A direct consequence of Proposition~\ref{isClopen} is the following result. 
\begin{lem}\label{isClosed}
For any metric space $(V,X,d)$ and $x\in X$, the connected component $C_x^\Sigma$ is closed in the open ball topology.
\end{lem}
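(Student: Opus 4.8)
The plan is to reduce everything to two facts already in hand: first, that $C_x^\Sigma$ is by definition an intersection of the sets $C_x^R$, and second, that each of those sets is closed in the open ball topology. Since an arbitrary intersection of closed sets is closed, the conclusion will follow immediately.

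Concretely, I would start by unwinding the definition of the $\Sigma$-connected component, recalling that
\[
C_x^\Sigma = \bigcap C_x^R,
\]
where $R$ ranges over all $\Sigma$-scales on $(V,X,d)$. (The family of such scales is non-empty by the defining requirement that each $\Sigma_X$ be non-empty, though this plays no essential role, since even an empty intersection would yield $X$, which is closed.) Next I would invoke Proposition~\ref{isClopen}, which asserts that each individual $C_x^R$ is $\Sigma$-clopen. I would then apply the earlier proposition stating that every $\Sigma$-clopen set is clopen in the open ball topology; in particular, each $C_x^R$ is closed in that topology.

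With each $C_x^R$ now known to be closed, I would finish by observing that the open ball topology, being a topology, is closed under arbitrary intersections of closed sets, so the intersection $\bigcap C_x^R = C_x^\Sigma$ is closed as claimed.

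I do not anticipate any genuine obstacle here, as the statement is labelled a direct consequence of Proposition~\ref{isClopen}; the only point worth flagging is that closedness of $C_x^\Sigma$ does not upgrade to openness. Each $C_x^R$ is \emph{clopen}, but the intersection over all $\Sigma$-scales need not be open, precisely because an arbitrary intersection of open sets can fail to be open. This mirrors the earlier remark that $R$-connected components are clopen while the genuine connected component is merely closed, the difference being attributable to the extra quantification over the scales.
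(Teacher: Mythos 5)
Your proof is correct and is exactly the argument the paper intends: it declares the lemma "a direct consequence of Proposition~\ref{isClopen}," meaning precisely your chain of writing $C_x^\Sigma=\bigcap C_x^R$, noting each $C_x^R$ is $\Sigma$-clopen and hence closed in the open ball topology, and using that arbitrary intersections of closed sets are closed. Your closing remark about why openness does not survive the intersection matches the paper's own earlier observation.
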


\begin{thm}
Let $f\colon (V,X,d)\to (W,Y,d)$ be a $\Sigma$-continuous function. If $X$ is $\Sigma$-connected, then so is the image $f(X)$. 
\end{thm}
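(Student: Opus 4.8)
The plan is to reduce this statement to the image-preservation argument already established for ordinary continuity in Proposition~\ref{ImageIsConn}, adapting it to the abstract scale system $\Sigma$. The essential content is that $\Sigma$-continuity guarantees that $S$-steps in $X$ map to $R$-steps in $Y$ for suitably matched $\Sigma$-scales $S$ and $R$, so that $S$-walks map to $R$-walks. First I would fix an arbitrary $\Sigma$-scale $R$ on $Y$; to show $f(X)$ is $R$-connected I must connect any two of its points by an $R$-walk lying in $f(X)$. Given such points, write them as $f(x)$ and $f(x')$ for some $x,x'\in X$.

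Next I would invoke the definition of $\Sigma$-continuity to produce a $\Sigma$-scale $S$ on $X$ such that $d(fx,fx')\prec R(fx)$ whenever $d(x,x')\prec S(x)$. Since $X$ is assumed $\Sigma$-connected, it is $S$-connected, so there exists an $S$-walk $x=x_1,\dots,x_n=x'$ in $X$. For each consecutive pair $(x_k,x_{k+1})$ at least one of $d(x_k,x_{k+1})\prec S(x_k)$ or $d(x_{k+1},x_k)\prec S(x_{k+1})$ holds, and the $\Sigma$-continuity condition translates this directly into $d(fx_k,fx_{k+1})\prec R(fx_k)$ or $d(fx_{k+1},fx_k)\prec R(fx_{k+1})$, respectively. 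Hence $(fx_k,fx_{k+1})$ is an $R$-step, and $f(x_1),\dots,f(x_n)$ is an $R$-walk from $f(x)$ to $f(x')$ consisting entirely of points of $f(X)$. As $R$ was arbitrary, $f(X)$ is $\Sigma$-connected.

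The one point requiring care — and the main obstacle, though a mild one — is that the abstract $\Sigma$-continuity condition uses $\prec$ rather than $\le$, so it is essential that steps here be defined via the $\prec$ variant; this is precisely why the remark preceding Proposition~\ref{isClopen} insists that in the scale-system setting one always uses the $\succ$ definition of step, since the equivalence of the two step definitions in Remark~\ref{alterStep} is unavailable for a general $\Sigma$. With that convention in force, the matching between the step condition and the $\Sigma$-continuity condition is exact, and no further estimates are needed. I would also note the implicit point that $f(X)$, carrying the restricted metric from $Y$, is nonempty since $X$ is, so it qualifies as a $\Sigma$-connected space rather than merely satisfying the walk condition vacuously.
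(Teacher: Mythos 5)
Your proposal is correct and follows essentially the same route as the paper's own (much terser) proof: fix a $\Sigma$-scale $R$ on $Y$, use $\Sigma$-continuity to obtain a matching $\Sigma$-scale $S$ on $X$, and push an $S$-walk forward to an $R$-walk. Your explicit case check that each $S$-step maps to an $R$-step, and your remark about why the $\prec$ form of the step definition is essential here, simply spell out what the paper leaves implicit.
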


\begin{proof}
Given a $\Sigma$-scale $R$ on $(W,Y,d)$ and two points $f(x),f(z)\in Y$ we must construct an $R$-walk from $f(x)$ to $f(z)$. Since $f$ is $\Sigma$-continuous there is a suitable $\Sigma$-scale $S$ on $(V,X,d)$ for which $f$ transforms $S$-walks to $R$-walks. Since an $S$-walk from $x$ to $z$ exists, the proof is complete.
\end{proof}

\begin{defn}
A space $(V,X,d)$ is \emph{$\Sigma$-discrete} if every $x\in X$ is \emph{$\Sigma$-isolated} in the sense that $\{x\}$ is $\Sigma$-clopen. The symbol $\because $ will be used for an arbitrary $\Sigma$-discrete space. 
\end{defn}

\begin{prop}
Every non-empty $\Sigma$-discrete space $\because$ is \emph{totally $\Sigma$-disconnected} in the sense that $C_x^\Sigma = \{x\}$ for all $x\in X$. 
\end{prop}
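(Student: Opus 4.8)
The plan is to show that in a $\Sigma$-discrete space $\because$, for each fixed point $x$, the singleton $\{x\}$ already equals the $R$-connected component $C_x^R$ for some single $\Sigma$-scale $R$, from which $C_x^\Sigma = \bigcap_R C_x^R \subseteq \{x\}$ follows immediately (and the reverse inclusion $\{x\} \subseteq C_x^\Sigma$ is trivial since $x \sim_R x$ for every $R$). Thus the heart of the matter is to produce, from the $\Sigma$-clopenness of $\{x\}$, a witnessing $\Sigma$-scale that already isolates $x$ at the level of a single scale.

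First I would unwind the definition of $\Sigma$-discreteness: since $\{x\}$ is $\Sigma$-clopen, it is in particular $\Sigma$-open, so there exists a $\Sigma$-scale $R$ on $\because$ with $B_R(\{x\}) = \{x\}$, meaning that $d(x,y) \prec R(x)$ forces $y = x$. Because $\{x\}$ is also $\Sigma$-closed, $X\setminus\{x\}$ is $\Sigma$-open, so there is a $\Sigma$-scale $R'$ with $B_R(X\setminus\{x\}) = X\setminus\{x\}$; informally this says that from any point other than $x$ one cannot step into $x$. The point of keeping both conditions is that a step $(u,v)$ requires \emph{either} $d(u,v)\prec R(u)$ \emph{or} $d(v,u)\prec R(v)$, so to block every walk leaving $\{x\}$ I must rule out both the forward step out of $x$ and the backward step into $x$ from its neighbours.

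Next I would combine these two scales into a single $\Sigma$-scale $S$ that simultaneously witnesses both $\Sigma$-openness of $\{x\}$ and $\Sigma$-openness of its complement. Here I must be a little careful: a scale system $\Sigma$ is only required to be a non-empty set of scales, not to be closed under pointwise meets, so I cannot simply take $S = R \wedge R'$ and assume $S \in \Sigma_{\because}$. The cleaner route, and the one I expect the author intends, is to invoke Proposition~\ref{isClopen}: for \emph{any} $\Sigma$-scale $T$ the component $C_x^T$ is $\Sigma$-clopen, and conversely, since $\{x\}$ is itself a $\Sigma$-clopen set realised by some $\Sigma$-scale $R$ (with $B_R(\{x\})=\{x\}$ and $B_R(X\setminus\{x\})=X\setminus\{x\}$), that same $R$ forbids any $R$-step from $x$ to a distinct point and any $R$-step from a distinct point to $x$. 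Consequently no nontrivial $R$-walk can begin at $x$, so $C_x^R = \{x\}$.

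The main obstacle, and the step warranting the most care, is exactly this matter of whether a single $\Sigma$-scale can simultaneously witness $\Sigma$-openness of $\{x\}$ and of $X\setminus\{x\}$, given that $\Sigma$ need not be meet-closed. I expect to resolve it by unpacking the definition of $\Sigma$-clopen literally: it asserts that $\{x\}$ is $\Sigma$-open \emph{and} $\Sigma$-closed, and the latter is the statement that $X\setminus\{x\}$ is $\Sigma$-open, i.e.\ $B_{R'}(X\setminus\{x\}) = X\setminus\{x\}$ for some $\Sigma$-scale $R'$. If the two witnessing scales $R$ and $R'$ genuinely differ, I would argue directly that neither $C_x^R$ nor $C_x^{R'}$ need equal $\{x\}$ on its own, but their intersection does: $C_x^\Sigma = \bigcap_T C_x^T \subseteq C_x^R \cap C_x^{R'}$, and the openness of $\{x\}$ via $R$ kills forward escape while the openness of $X\setminus\{x\}$ via $R'$ kills the backward approach, so any purported walk from $x$ to $y\neq x$ must contain a first step $(u,v)$ crossing the boundary of $\{x\}$, and both step-conditions for that crossing are contradicted — one by $R$, the other by $R'$. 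Hence $C_x^\Sigma = \{x\}$, as required.
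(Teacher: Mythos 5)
Your main line of argument --- take a single $\Sigma$-scale $R$ with $B_R(\{x\})=\{x\}$ and $B_R(X\setminus\{x\})=X\setminus\{x\}$, observe that the first equality kills the step condition $d(x,t)\prec R(x)$ and the second kills $d(t,x)\prec R(t)$, hence no $R$-walk leaves $x$ and $C_x^\Sigma\subseteq C_x^R=\{x\}$ --- is exactly the paper's proof. The concern you raise along the way is also legitimate: the definition of $\Sigma$-clopen literally provides two possibly different witnessing scales, one for $\{x\}$ and one for its complement, and a general scale system is not closed under meets or under the patching $S(x)=R(x)$, $S(y)=R'(y)$ for $y\ne x$ that would merge them. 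The paper silently reads ``a scale witnessing the isolation of $x$'' as a single scale doing both jobs.

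However, your fallback argument for the two-witness case is wrong, and this is a genuine gap in the proposal. A walk is a walk \emph{relative to one fixed scale}: for a $T$-walk, both disjuncts of the crossing step $(x,v)$ read $d(x,v)\prec T(x)$ or $d(v,x)\prec T(v)$, with the same $T$ in both. So you cannot refute the first disjunct using $R$ and the second using $R'$. Concretely, $C_x^R$ can contain $v\ne x$ via the backward condition $d(v,x)\prec R(v)$ (which only $B_R(X\setminus\{x\})=X\setminus\{x\}$ would forbid, and $R$ need not satisfy that), while $C_x^{R'}$ can contain the same $v$ via the forward condition $d(x,v)\prec R'(x)$; then $v\in C_x^R\cap C_x^{R'}$ and your claimed contradiction never materialises. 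Indeed, on $X=\{x,v\}$ with $d(x,v)=d(v,x)=1$, the scales $R=(1/2,2)$ and $R'=(2,1/2)$ make $\{x\}$ and $\{v\}$ each $\Sigma$-clopen for $\Sigma=\{R,R'\}$, yet $v\sim_R x$ and $v\sim_{R'}x$. The fix is not the intersection trick but the single-witness reading the paper uses (or a hypothesis on $\Sigma$, such as closure under the patching above, that produces one).
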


\begin{proof}
Let $x\in X$ and $R$ be a $\Sigma$-scale witnessing the isolation of $x$. Then for all $y\in X$ with $x\ne y$, no $R$-walk between $x$ and $y$ exists since any step connecting $x$ to any $t$ in $\hat x=X\setminus \{x\}$ must satisfy either $d(x,t)\prec R(x)$ or $d(t,x)\prec R(t)$. But the former implies that $t\in B_R(x)=\{x\}$, while the latter implies that $x\in B_R(\hat x)=\hat x$, but neither is possible. 
\end{proof}

Let us consider the set $\{\bullet, \circ\} $ as a metric space valued in $[0,\infty ]$, with $d(\bullet, \circ)=d(\circ, \bullet)=\infty $.

\begin{lem}
For a non-empty metric space $(V,X,d)$ the following conditions are equivalent
\begin{itemize}
\item $X$ is $\Sigma$-connected.
\item Every $\Sigma$-continuous function $X\to {\because}$ to a $\Sigma$-discrete space is constant.
\item Every $\Sigma$-continuous function $X\to \{\bullet, \circ\}$ is constant. 
\item $X$ does not have any non-trivial $\Sigma$-clopen sets. 
\end{itemize}
\end{lem}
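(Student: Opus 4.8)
My plan is to run the cycle $(1)\Rightarrow(2)\Rightarrow(3)\Rightarrow(4)\Rightarrow(1)$, leaning on three facts already in place: that each $R$-connected component $C_x^R$ is $\Sigma$-clopen and, crucially, is witnessed by the \emph{single} scale $R$ (Proposition~\ref{isClopen}); that $\Sigma$-continuous functions carry $S$-walks to $R$-walks; and that every $\Sigma$-discrete space is totally $\Sigma$-disconnected. A useful preliminary observation is that $\{\bullet,\circ\}$ is itself $\Sigma$-discrete for every scale system: since no element of $[0,\infty]$ is well above $\infty$, one has $B_R(\{\bullet\})=\{\bullet\}$ and $B_R(\{\circ\})=\{\circ\}$ for every scale $R$, so both singletons are $\Sigma$-clopen. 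This makes $(2)\Rightarrow(3)$ an immediate specialisation, taking $\because=\{\bullet,\circ\}$.

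For $(4)\Rightarrow(1)$ I would fix any $\Sigma$-scale $R$ and a point $x$; by Proposition~\ref{isClopen} the non-empty set $C_x^R$ is $\Sigma$-clopen, so $(4)$ forces $C_x^R=X$, whence $X$ is $R$-connected, and since $R$ was arbitrary $X$ is $\Sigma$-connected. For $(1)\Rightarrow(2)$, take a $\Sigma$-continuous $f\colon X\to\because$ and points $x,x'\in X$. For each $\Sigma$-scale $R$ on $\because$, $\Sigma$-continuity produces a $\Sigma$-scale $S$ on $X$ transforming $S$-walks into $R$-walks, and $\Sigma$-connectedness of $X$ supplies an $S$-walk from $x$ to $x'$; hence $f(x)\sim_R f(x')$ for every such $R$, so $f(x')\in C_{f(x)}^\Sigma$. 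As $\because$ is totally $\Sigma$-disconnected this component is $\{f(x)\}$, forcing $f(x')=f(x)$, so $f$ is constant.

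The implication $(3)\Rightarrow(4)$ is where I expect the genuine difficulty, and I would argue by contraposition: from a non-trivial $\Sigma$-clopen $A$ I must build a non-constant $\Sigma$-continuous map $\chi_A\colon X\to\{\bullet,\circ\}$ sending $A\mapsto\bullet$ and $\hat A\mapsto\circ$. Unwinding $\Sigma$-continuity of $\chi_A$, what is required is a \emph{single} $\Sigma$-scale $S$ with $B_S(A)=A$ and $B_S(\hat A)=\hat A$, since then $d(x,x')\prec S(x)$ keeps $x$ and $x'$ on one side and the target distance is $0$. The obstacle is that $\Sigma$-clopenness of $A$ delivers only two possibly-distinct scales, one scale $R_1$ with $B_{R_1}(A)=A$ and a separate scale $R_2$ with $B_{R_2}(\hat A)=\hat A$. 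The natural remedy is the pointwise meet $S=R_1\wedge R_2$, which is a scale (by the axiom $a\wedge b\succ0$ for $a,b\succ0$) and satisfies $B_S(A)\subseteq B_{R_1}(A)=A$ and $B_S(\hat A)\subseteq B_{R_2}(\hat A)=\hat A$, hence equality in both. The one point that must genuinely be checked is that $R_1\wedge R_2$ is again a $\Sigma$-scale: this holds for $\Sigma_a$, $\Sigma_0$, $\Sigma_u$, and $\Sigma_{\bf g}$, which are all closed under pointwise meets, and it is precisely this meet-stability that converts a two-scale $\Sigma$-clopen set into the single-scale separation on which the whole argument (and the usefulness of Proposition~\ref{isClopen}) turns. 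I would therefore expect the heart of the proof to be this combining step, and would make the meet-stability of the scale system explicit wherever $(4)$ is brought into the equivalence.
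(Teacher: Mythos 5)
Your cycle $(1)\Rightarrow(2)\Rightarrow(3)\Rightarrow(4)\Rightarrow(1)$ is exactly the paper's, and your $(1)\Rightarrow(2)$ and $(2)\Rightarrow(3)$ are essentially identical to the paper's (same use of total $\Sigma$-disconnectedness of $\Sigma$-discrete spaces, same computation that $\{\bullet,\circ\}$ is $\Sigma$-discrete because nothing in $[0,\infty]$ is well above $\infty$). The two places where you deviate are precisely the places where the paper's own argument is loose, and your versions are the more careful ones. For $(4)\Rightarrow(1)$ the paper asserts that $C_x^\Sigma$ is $\Sigma$-clopen; but $C_x^\Sigma$ is an intersection of $\Sigma$-clopen sets over all $\Sigma$-scales, and only its closedness in the open ball topology is actually established (Lemma~\ref{isClosed}). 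Your route --- apply $(4)$ to each $C_x^R$ separately via Proposition~\ref{isClopen} and conclude $R$-connectedness for every $\Sigma$-scale $R$ --- is the correct reading. For $(3)\Rightarrow(4)$ you have put your finger on a genuine subtlety: the definition of $\Sigma$-clopen supplies two possibly distinct witnessing scales, one with $B_{R_1}(A)=A$ and another with $B_{R_2}(X\setminus A)=X\setminus A$, whereas $\Sigma$-continuity of the indicator function requires a \emph{single} $\Sigma$-scale $S$ doing both jobs. The paper writes ``let $R$ be a $\Sigma$-scale witnessing the fact that $C$ is $\Sigma$-clopen'', tacitly assuming one scale suffices. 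Your pointwise meet $R_1\wedge R_2$ repairs this, but at the cost of a hypothesis --- closure of $\Sigma$ under pointwise meets --- that is not part of the definition of a scale system, though it does hold for $\Sigma_a$, $\Sigma_0$, $\Sigma_u$ and $\Sigma_{\bf g}$ as you observe. You are right to insist on making that hypothesis explicit (or on building the single-scale requirement into the definition of $\Sigma$-clopen): without it, implications $(4)\Rightarrow(1)\Leftrightarrow(2)\Leftrightarrow(3)$ survive, but the passage from the other conditions back to $(4)$ is exactly what is at stake.
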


\begin{proof}
Suppose that $(V,X,d)$ is $\Sigma$-connected. A $\Sigma$-continuous function $f\colon X\to {\because}$ must have a $\Sigma$-connected image, which thus must lie in a $\Sigma$-connected component of $\because$, which is a singleton, and thus $f$ is constant. 

Suppose now that every $\Sigma$-continuous function $f\colon X\to {\because} $ is constant. To show that any $\Sigma$-continuous function $f\to \{\bullet, \circ \}$ is constant, it suffices to show that the codomain is $\Sigma$-discrete. Indeed, for any $\Sigma$-scale $R$ on $\{\bullet, \circ\}$ (and at least one such scale exists) $B_R(\bullet)=\{\bullet\}$ and $B_R(\circ)=\{\circ\}$ are easily computed. 

Assume now that every $\Sigma$-continuous function $X\to \{\bullet, \circ\}$ is constant. If a non-trivial $\Sigma$-clopen set $C\subseteq X$ existed, then consider the indicator function $\xi_C\colon X \to \{\bullet ,\circ\}$, which is clearly not constant. To obtain a contradiction we show that $\xi_C$ is $\Sigma$-continuous. Indeed, let $R$ be a $\Sigma$-scale witnessing the fact that $C$ is $\Sigma$-clopen. With that scale the condition $d(x,y)\prec R(x)$ implies that either $x,y\in C$ or that $x,y\notin C$, and thus that $\xi_C(x)=\xi_C(y)$, so no matter which $\Sigma$-scale is chosen on $\{\bullet,\circ\}$, a suitable $\Sigma$-scale on $X$ (namely $R$) exists. 

Finally, suppose that $X$ does not have any non-trivial $\Sigma$-clopen sets. Taking any $x\in X$ the $\Sigma$-connected component $C_x^\Sigma$ is $\Sigma$-clopen, and non-empty, and thus $C_x^\Sigma=X$. 
\end{proof}

Every choice of scale system $\Sigma$ thus yields a notion of connectedness, and we obtain a rather large hierarchy of notions of connectedness. Different scales may lead to the same notion of connectedness and some notions of connectedness need not be topological invariants. Some general results about $\Sigma$-connectedness are given in the next section. For now we will just notice the following triviality.  

\begin{thm}
Let $\Sigma_1$ and $\Sigma_2$ be two scale systems. If $\Sigma_1\subseteq \Sigma_2$, then any space $(V,X,d)$ which is $\Sigma_2$-connected, is also $\Sigma_1$-connected. In particular, any space $(V,X,d)$ whose induced topological space is connected in the usual sense, is $\Sigma$-connected for all scale systems $\Sigma$. 
\end{thm}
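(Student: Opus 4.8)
The plan is to unwind the definition of $\Sigma$-connectedness directly, since both assertions are essentially formal. For the first statement, I would begin by fixing the interpretation of the hypothesis $\Sigma_1\subseteq\Sigma_2$ as the pointwise containment $(\Sigma_1)_X\subseteq(\Sigma_2)_X$ for every object $X=(V,X,d)$ of $\bf Met_{all}$. Suppose $(V,X,d)$ is $\Sigma_2$-connected, so that $X$ is $R$-connected for every $\Sigma_2$-scale $R$. I would then simply observe that every $\Sigma_1$-scale is, by the containment, also a $\Sigma_2$-scale, whence $X$ is $R$-connected for every such $R$ as well. By the definition of $\Sigma_1$-connectedness this is precisely what is required, so the containment of scale systems reverses into an implication of connectedness notions.

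For the second (``in particular'') statement, the key observation is that the collection $\Sigma_a$ of all scales is the largest scale system: for any scale system $\Sigma$ we have $\Sigma_X\subseteq(\Sigma_a)_X$ for every object, since $(\Sigma_a)_X$ consists of \emph{all} scales on $X$. Next I would invoke the earlier theorem characterising topological connectedness metrically, namely that a non-empty $V$-valued metric space has connected induced open ball topology precisely when it is connected in the metric sense; and metric connectedness is by definition $R$-connectedness for all scales $R$, that is, $\Sigma_a$-connectedness. Thus the hypothesis that the induced topology is connected says exactly that $(V,X,d)$ is $\Sigma_a$-connected. Applying the first part of the theorem with $\Sigma_1=\Sigma$ and $\Sigma_2=\Sigma_a$ then yields $\Sigma$-connectedness for every scale system $\Sigma$.

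Since the argument amounts to quantifying over a smaller set of scales, I do not anticipate any genuine obstacle; the result is, as the paper notes, a triviality. The only points requiring a moment's care are the two matters of bookkeeping just flagged: confirming that $\Sigma_1\subseteq\Sigma_2$ is understood pointwise across all objects of $\bf Met_{all}$, and identifying metric connectedness (quantification over \emph{all} scales) with $\Sigma_a$-connectedness, so that the earlier topological characterisation can be fed into the general inclusion statement. With those identifications in place, both parts follow by inspection of the definitions.
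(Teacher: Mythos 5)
Your proof is correct and is exactly the argument the paper has in mind; the paper in fact omits the proof entirely, stating the result as a ``triviality,'' and your unwinding of the definitions (containment of scale systems reverses the implication of connectedness, plus the identification of topological connectedness with $\Sigma_a$-connectedness via the earlier equivalence theorem) is the only natural way to fill it in. No issues.
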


Intuitively, $\Sigma$-connectedness is a concept that may consider spaces with various notions of gaps to still be connected, while the classical notion of topological connectedness is the strongest kind of connectedness in this hierarchy. For instance, $\Sigma_u$-connectedness and $\Sigma_0$-connectedness both correspond to uniform connectedness. $\Sigma_{\bf g}$-connectedness allows for spaces with small cracks (where $\bf g$ determined how narrow a crack must be) to still be considered connected. Finally, $\Sigma_{\bf e}$-connectedness allows for spaces with expanding cracks to be considered connected as long as the rate of expansion of the crack is small relative to the choice of $\bf e$.   

To close this section we note that path connectedness can not be captured as $\Sigma$-connectedness for any scale system $\Sigma$, for (at least) two reasons: 1) path connected components, unlike $\Sigma$-connected components, need not be closed; and 2) path connectedness is a strictly stronger notion than connectedness, but connectedness is the strongest property in the hierarchy.  

\section{Classical results revisited}\label{consequences}

In this section we revisit some fundamental results pertaining to connectedness, examined through the lens of the metric formalism developed above. In short: old results - new perspective. We also prove some classical results at the level of $\Sigma$-connectedness, thus identifying the results as formal consequences of the scale formalism above. 

Let $(V,X,d)$ be a $V$-valued metric space. Recall that for any set $S\subseteq X$, a point $x\in X$ belongs to the closure of $S$ if, and only if, $d(x,S)=0$. Denote by $\overline{S}$ the closure of $S$ under the open ball topology on $X$.

\begin{thm}
If $C$ is a $\Sigma$-connected subset of a space $(V,X,d)$, then any subset $C\subseteq D\subset \overline{C}$ is $\Sigma$-connected.
\end{thm}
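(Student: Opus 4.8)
The plan is to verify $\Sigma$-connectedness of $D$ directly through the positive (walk) formulation, which fits the spirit of the paper better than arguing with clopen sets. Since $C\subseteq D$ and $C$ is $\Sigma$-connected, $D$ is non-empty, so it remains to fix an arbitrary $\Sigma$-scale $R$ on $D$ and show that any two points $p,q\in D$ are $R$-connected within $D$. The argument rests on two ingredients: first, that every point of $D$ can be joined by a single $R$-step to some point of $C$ (this is where the hypothesis $D\subseteq\overline{C}$ enters); and second, that any two points of $C$ are joined by an $R$-walk lying entirely in $C\subseteq D$ (this is where the $\Sigma$-connectedness of $C$ enters). Combining these via the transitivity of $\sim_R$ then connects $p$ to $q$.

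For the first ingredient, fix $w\in D$. Since $w\in\overline{C}$, the closure characterisation recalled before the theorem gives $d(w,C)=\bigwedge_{c\in C}d(w,c)=0$. Because $R(w)\succ 0$, I would interpolate a $\delta$ with $0\prec\delta\prec R(w)$ and apply the definition of $\delta\succ0$ to the family $\{d(w,c)\mid c\in C\}$, whose meet is $0$; this produces a point $c\in C$ with $d(w,c)\le\delta$, and hence $d(w,c)\prec R(w)$ by the property that $a\le b\prec c$ implies $a\prec c$. Thus $(w,c)$ is an $R$-step and $w\sim_R c$. This is precisely the place where one must use the $\succ$-version of a step rather than the $\le$-version flagged before Proposition~\ref{isClopen}, since the strict inequality is recovered from $d(w,C)=0$ only through the interpolation.

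For the second ingredient I would restrict $R$ to $C$ and appeal to the $\Sigma$-connectedness of $C$: an $R|_C$-walk in $C$ uses only steps among points of $C$, so it is simultaneously an $R$-walk in $D$. Concretely, given $p,q\in D$, the first ingredient yields $c_p,c_q\in C$ with $p\sim_R c_p$ and $q\sim_R c_q$; the $\Sigma$-connectedness of $C$ connects $c_p$ to $c_q$ inside $C$, hence inside $D$; and transitivity of $\sim_R$ gives $p\sim_R q$. The main obstacle, and the only genuinely delicate point, is that this step requires $R|_C$ to itself be a $\Sigma$-scale on $C$, i.e.\ that the scale system be hereditary under passage to subspaces. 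This is immediate for $\Sigma_a$, $\Sigma_0$, $\Sigma_u$ and $\Sigma_{\bf g}$, whose defining conditions (none, a uniform lower bound, constancy, a fixed lower bound) are preserved by restriction; it is more delicate for $\Sigma_{\bf e}$, where a point of reference for $R$ may fail to lie in $C$, so there the statement should be read as carrying an implicit heredity assumption. Once heredity is granted, the proof closes as above.
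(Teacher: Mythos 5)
Your proof follows the same route as the paper's: use $d(w,C)=0$ together with $R(w)\succ 0$ to join each point of $D$ to $C$ by a single step, then bridge through an $R$-walk inside $C$ and conclude by transitivity of $\sim_R$. You are in fact more careful than the paper on the two points it glosses over --- interpolating a $\delta$ with $0\prec\delta\prec R(w)$ so that the joining step satisfies the $\prec$-form of the step condition (the form required for general $\Sigma$), and flagging that the restriction $R|_C$ must itself be a $\Sigma$-scale on $C$, a heredity assumption the paper's proof leaves implicit.
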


\begin{proof}
To show $D$ is connected let $R$ be a scale and $x,y\in D$, namely $d(x,C)=0=d(y,C)$. Since $R(x)\succ 0$, it follows that there exists $x'\in C$ with $d(x,x')\le R(x)$. Similarly, there exists $y'\in C$ with $d(y,y')\le R(x)$. Since $C$ is connected, there exists a walk in $C$ from $x'$ to $y'$, which we may now augment at each end to obtain a walk in $D$ from $x$ to $y$, as required. 
\end{proof}

\begin{thm}
The closed interval $[a,b]$ with the Euclidean topology is connected. 
\end{thm}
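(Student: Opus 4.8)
The plan is to work entirely within the $[0,\infty]$-valued metric formalism: regarding $[a,b]$ as the continuity space $([0,\infty],[a,b],d)$ with $d(x,y)=|x-y|$, it suffices, by the equivalence between metric connectedness and topological connectedness established above, to show that $[a,b]$ is $R$-connected for every scale $R$. Since $\sim_R$ is an equivalence relation, it is in turn enough to connect the left endpoint $a$ to an arbitrary point $t\in[a,b]$ by an $R$-walk. The essential analytic input will be the least-upper-bound property of the reals, deployed through the $R$-connected component $C_a^R$.

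So I would fix a scale $R$ and a target $t\in(a,b]$, and set $s=\sup\{u\in[a,t]\mid a\sim_R u\}$. The set over which the supremum is taken is exactly $C_a^R\cap[a,t]$, and by Proposition~\ref{AreClopen} the component $C_a^R$ is clopen, in particular closed; intersecting with the closed interval $[a,t]$ keeps it closed, so the supremum is attained and $a\sim_R s$ via some finite walk $W$.

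The case $s=t$ already gives $a\sim_R t$, so the heart of the argument is to rule out $s<t$. Assuming $s<t$, the interval $(s,t]$ is non-empty, and because $R(s)\succ 0$ I may pick $s'$ with $s<s'\le t$ and $d(s,s')\prec R(s)$ (any $s'$ sufficiently close to $s$ from the right works, with $s'=t$ permissible when $R(s)=\infty$). Then $(s,s')$ is an $R$-step, so appending $s'$ to the walk $W$ produces an $R$-walk from $a$ to $s'$, whence $a\sim_R s'$. But $s'>s$ contradicts the definition of $s$ as a supremum. Hence $s=t$ and $a\sim_R t$.

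As $t$ was arbitrary, every point of $[a,b]$ is $R$-connected to $a$, so by transitivity any two points are $R$-connected; since $R$ was an arbitrary scale, $[a,b]$ is connected. I expect the main obstacle to be the attainment step $a\sim_R s$: it is precisely here that the argument would fail for a space lacking the completeness that forces $C_a^R$ to absorb its approaching points, and it is exactly the closedness of the $R$-connected component (Proposition~\ref{AreClopen}) together with the least-upper-bound property that licenses passing from \emph{walks reaching points arbitrarily close to $s$} to \emph{a single walk reaching $s$}. Everything else is the familiar one-step walk-extension, so the proof reads as a direct, positive construction in the spirit of the formalism rather than as a disconnection argument.
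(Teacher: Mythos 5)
Your proof is correct and is essentially the paper's own second argument (the one ``without appealing to compactness''): both rest on the clopen-ness of the component $C_a^R$ from Proposition~\ref{AreClopen} together with the order-completeness of $\mathbb{R}$, the only cosmetic difference being that you take the supremum of $C_a^R\cap[a,t]$ and extend a walk directly, whereas the paper takes the infimum of the complement and derives a contradiction. Note that the paper also records a shorter alternative via compactness, Theorem~\ref{unifConnPlusCompIsConn}, and the archimedean property, which you did not pursue.
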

\begin{proof}
We present two proofs, one which appeals to compactness, and one that does not. 

With knowledge of compactness of $[a,b]$: By Theorem~\ref{unifConnPlusCompIsConn}, it suffices to show that $[a,b]$ is uniformly connected. And indeed, given a uniform scale $R$ the existence of a walk between any two points of $[a,b]$ follows trivially by the fact that $\mathbb R$ is archimedean.

Without appealing to compactness: Given a scale $R$ consider $C=C_a^R$, the $R$-connected component of $a$. It suffices to show that $C=[a,b]$, so assume that is not the case and let $t$ be the infimum of $[a,b]\setminus C$. 

Let us write $\hat C = [a,b]\setminus C$. Remembering that for all $x\in [a,b]$ if $x\in C$, then $B_R(x)\subseteq C$, and if $x\in \hat C$, then $B_R(x)\subseteq \hat C$ we proceed as follows. If $t=1$, then $C=[a,b)$. 
But $b\in \hat C$, implying $B_R(b)\subseteq \hat C $, an impossibility. We thus conclude that $t<b$. If $t\in C$, then $B_R(t)\subseteq C$, which 
contradicts the definition of $t$. Knowing that $t\in \hat C$, it follows that $B_R(t)\subseteq \hat C$, which would lead to a contradiction unless $t=a$. But certainly $a\in C$ and we already established that $t\notin C$, a final blow. 
\end{proof}

\begin{thm}
If $X$ is a path connected space, then $X$ is connected. 
\end{thm}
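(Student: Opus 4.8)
The plan is to reduce path connectedness to two facts already in hand: the connectedness of the interval (the preceding theorem, that $[a,b]$ with the Euclidean topology is connected) and the preservation of connected pairs of points under continuous maps (Proposition~\ref{ImageIsConn}). Fix two points $x,y\in X$; since connectedness of $X$ means $x\sim y$ for every pair, it suffices to treat this arbitrary pair. Because $X$ is path connected there is a path, i.e. a continuous map $\gamma\colon [0,1]\to X$ with $\gamma(0)=x$ and $\gamma(1)=y$, where $[0,1]$ carries the Euclidean topology, which I realise as the usual $[0,\infty]$-valued metric space.

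First I would record that $[0,1]$ is connected: this is exactly the preceding theorem, so in the metric sense its endpoints satisfy $0\sim 1$. Next I must ensure that $\gamma$ is continuous in the sense of morphisms of ${\bf Met_c}$, and not merely topologically continuous. This is precisely where the equivalence of categories $\mathcal{O}\colon {\bf Met_c}\to{\bf Top}$ of \cite{WeissTopMetEquiv} enters: since $\mathcal{O}$ is the identity on underlying sets and on functions, a map between $V$-valued metric spaces is metrically continuous if and only if it is continuous for the induced open ball topologies. Hence the topological path $\gamma$ is automatically a morphism in ${\bf Met_c}$, with $[0,1]$ valued in $[0,\infty]$ and $X$ valued in its own quantale $V$; I note that Proposition~\ref{ImageIsConn} is stated for functions between spaces valued in possibly distinct quantales, so no compatibility between the quantales is needed.

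With these two facts assembled the conclusion is immediate: applying Proposition~\ref{ImageIsConn} to the continuous map $\gamma$ and to the connected pair $0\sim 1$ yields that $\gamma(0)=x$ and $\gamma(1)=y$ are connected in $X$. As $x$ and $y$ were arbitrary, $X$ is connected.

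The only genuine obstacle is the bookkeeping in the middle step, namely matching the classical, purely topological notion of a path with the metric notion of continuity required by Proposition~\ref{ImageIsConn}; once the equivalence of categories is invoked this identification is automatic, and the remainder is a one-line appeal to the preservation of connected points under continuous maps. Should one wish to avoid citing the equivalence explicitly, an alternative is to metrize $[0,1]$ and $X$ over a common value quantale via Theorem~\ref{mutual} and then verify directly that $\gamma$ meets the $\varepsilon$--$\delta$ definition of metric continuity, but this route is strictly more laborious and yields no additional insight.
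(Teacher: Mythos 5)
Your argument is correct and is essentially the paper's own proof: the paper likewise takes a path $[0,1]\to X$, uses the already-established connectedness of the interval, and invokes (the argument of) Proposition~\ref{ImageIsConn} to push the connected pair $0\sim 1$ forward to $x\sim y$. Your explicit handling of the identification between topological and metric continuity via the equivalence $\mathcal{O}\colon {\bf Met_c}\to{\bf Top}$ is a point the paper leaves implicit, but it is the intended justification.
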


\begin{proof}
Let $d$ be a $V$-valued metric inducing the topology on $X$, let $R$ be a scale on $X$, and $x,y\in X$ two points, for which we may find a connecting path $[0,1]\to X$, where we treat $[0,1]$ as a metric subspace of $\mathbb R$ with the Euclidean metric. Now repeat the argument of Proposition~\ref{ImageIsConn}. 
\end{proof}

A \emph{subspace} of a $V$-valued metric space $(V,X,d)$ is a subset $Y\subseteq X$ equipped with the metric $d'\colon Y\times Y\to V$ given by $d'(y_1,y_2)=d(y_1,y_2)$. A scale system $\Sigma$ is \emph{hereditary} if for every space $(V,X,d)$ and every $\Sigma $-scale $R$ on $X$, the restriction of $R$ to any subset $Y\subseteq X$ is a $\Sigma$-scale on the subspace $(V,Y,d)$. 

\begin{thm}
Let $\Sigma$ be a hereditary scale system, $(V,X,d)$ an ambient space, and $\{Y_k\}_{k\ge 1}$ a countable collection of $\Sigma$-connected subspaces of $X$. If $Y_k\cap Y_{k+1}\ne \emptyset$ for all $k\ge 1$, then the subspace $Y=\bigcup_k Y_k$ is $\Sigma$-connected. 
\end{thm}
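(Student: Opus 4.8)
The plan is to show that any two points of $Y=\bigcup_k Y_k$ are $R$-connected for an arbitrary $\Sigma$-scale $R$ on the subspace $(V,Y,d)$. The key leverage is the hereditary hypothesis: since $\Sigma$ is hereditary, the restriction of $R$ to each $Y_k$ is a $\Sigma$-scale on the subspace $(V,Y_k,d)$, and because each $Y_k$ is $\Sigma$-connected, any two points of $Y_k$ are connected by an $R$-walk lying entirely within $Y_k$. Crucially, such a walk is also an $R$-walk in $Y$, since the metric on a subspace is the restriction of the ambient metric and the step conditions $d(x,y)\prec R(x)$ or $d(y,x)\prec R(y)$ are insensitive to the ambient set.

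First I would fix a $\Sigma$-scale $R$ on $Y$ and two arbitrary points $p,q\in Y$. By definition of the union, there are indices $i,j$ with $p\in Y_i$ and $q\in Y_j$; without loss of generality assume $i\le j$. The strategy is to hop from $Y_i$ up to $Y_j$ through the overlapping members, using the nonempty intersections $Y_k\cap Y_{k+1}$ as transfer points. For each $k$ with $i\le k<j$, choose a point $c_k\in Y_k\cap Y_{k+1}$, and set $c_{i-1}=p$ (thought of as living in $Y_i$) and $c_j=q$. More cleanly, I would build the walk as a concatenation: an $R$-walk inside $Y_i$ from $p$ to $c_i$, then an $R$-walk inside $Y_{i+1}$ from $c_i$ to $c_{i+1}$, and so on, finishing with an $R$-walk inside $Y_j$ from $c_{j-1}$ to $q$. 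Each individual segment exists by the $\Sigma$-connectedness of the relevant $Y_k$ together with heredity of $R$, as noted above.

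The concatenation of these finitely many walks is itself a finite sequence of points in $Y$ in which every consecutive pair is an $R$-step, hence an $R$-walk from $p$ to $q$ in $Y$; the shared endpoints $c_k$ glue consecutive segments without creating any bad step. Since $R$ and $p,q$ were arbitrary, $Y$ is $\Sigma$-connected. I would take care over the edge case $i=j$, where $p$ and $q$ already lie in a common $Y_i$ and a single intra-$Y_i$ walk suffices, and over the indexing of the chosen overlap points.

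The main obstacle, such as it is, is conceptual rather than computational: one must verify that a walk witnessing connectedness inside a subspace $Y_k$ genuinely transfers to a walk in the larger subspace $Y$, i.e. that the restricted scale and the step relation behave compatibly under passage between subspaces. This is exactly what the hereditary hypothesis is designed to guarantee, and it is the only place the hypothesis is used. The finiteness of each segment together with the fact that we traverse only finitely many members $Y_i,\dots,Y_j$ ensures the resulting walk is finite, so no issue of infinite concatenation arises despite the collection being countable.
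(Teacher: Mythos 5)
Your proof is correct and follows essentially the same route as the paper: fix a $\Sigma$-scale $R$ on $Y$, use heredity to restrict it to each $Y_k$, connect the two given points through chosen overlap points $c_k\in Y_k\cap Y_{k+1}$ by walks inside successive $Y_k$'s, and concatenate the finitely many finite walks. Your added care about why an $R$-walk inside a subspace remains an $R$-walk in $Y$ is a useful explicit check that the paper leaves implicit, but it does not change the argument.
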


\begin{proof}
Given a $\Sigma$-scale $R$ and two points $x,z\in Y$, we must construct a walk from $x$ to $z$. Without loss of generality, $x\in Y_{k_x}$ and $z\in Y_{k_z}$ with $k_x \le k_z$. We may now find a walk in $Y_{k_x}$ from $x$ to a point common with $Y_{k_x+1}$, concatenate it with a walk in $Y_{k_x+1}$ to a point common with $Y_{k_x+2}$, and so on, until we concatenate with walk in $Y_{k_z}$ ending at $z$. The entire concatenation is finite (as a finite union of finitely long walks), and thus obviously a walk residing in $Y$ and connects $x$ to $z$. 
\end{proof}

\begin{rem}
The familiar special case of this result for connectedness follows by noting that $\Sigma_a$ is hereditary and by applying mutual metrizablity (Theorem~\ref{mutual}).
\end{rem}

The following result has a similar proof, and should be followed by a similar remark. We omit both. 

\begin{thm}
Let $\Sigma$ be a hereditary scale system, $(V,X,d)$ an ambient space, and $\{Y_i\}_{i\in I}$ a family of $\Sigma$-connected subspaces. If $\bigcap _i Y_i\ne\emptyset$, then $\bigcup_i Y_i$ is $\Sigma$-connected. 
\end{thm}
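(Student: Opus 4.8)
The plan is to mimic the proof of the preceding theorem on countable chains, but the presence of a common point in $\bigcap_i Y_i$ makes the argument considerably simpler: instead of chaining through a sequence of overlapping subspaces, every walk can be routed through a single fixed basepoint, so that only two walks need to be concatenated, regardless of the cardinality of $I$.

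First I would fix a point $p\in\bigcap_i Y_i$, which exists by hypothesis and lies in every $Y_i$. Write $Y=\bigcup_i Y_i$. To show that $Y$ is $\Sigma$-connected I would take an arbitrary $\Sigma$-scale $R$ on the subspace $(V,Y,d)$ together with two points $x,z\in Y$, say $x\in Y_i$ and $z\in Y_j$. The role of heredity is precisely to push $R$ downward: since $\Sigma$ is hereditary, the restrictions $R|_{Y_i}$ and $R|_{Y_j}$ are $\Sigma$-scales on the subspaces $Y_i$ and $Y_j$ respectively. Because each $Y_k$ carries the restricted metric and is $\Sigma$-connected, there is an $R|_{Y_i}$-walk inside $Y_i$ from $x$ to $p$ and an $R|_{Y_j}$-walk inside $Y_j$ from $p$ to $z$.

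The remaining step is to observe that a walk inside a subspace is automatically a walk in the ambient space: since the metric on each $Y_k$ is the restriction of $d$ and the scale values agree, any pair forming an $R|_{Y_k}$-step in $Y_k$ is also an $R$-step in $Y$. Concatenating the two walks at $p$ therefore yields an $R$-walk in $Y$ from $x$ to $z$, and as $R$ and the points were arbitrary, $Y$ is $\Sigma$-connected.

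I do not expect a genuine obstacle here; the only points demanding care are purely formal. One is the bookkeeping that heredity is being applied with $(V,Y,d)$ as the ambient space, so that the restrictions to the various $Y_k\subseteq Y$ genuinely qualify as $\Sigma$-scales; the other is the verification that steps, and hence walks, computed with respect to the restricted metric remain valid with respect to $d$. Finally, the finiteness of each individual walk guarantees that the concatenation is again a finite sequence, so no difficulty arises from $I$ being uncountable.
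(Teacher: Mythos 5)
Your proof is correct and is essentially the argument the paper intends: the paper omits the proof, remarking only that it is "similar" to the preceding countable-chain theorem, and your adaptation (restricting the scale by heredity to each $Y_i$, walking from $x$ to a fixed $p\in\bigcap_i Y_i$ and from $p$ to $z$, and concatenating) is exactly that intended simplification. The two points of care you flag --- applying heredity with $(V,Y,d)$ as ambient and checking that steps in a subspace remain steps in $Y$ --- are the right ones and are handled correctly.
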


We conclude this work with another well-known result, showing yet again the intuitive nature of the arguments resulting from the metric formalism. 

Let $\{(V_i,X_i,d)\}_{i\in I}$ be a family of spaces indexed by a set $I$. Let $X$ be the product of the underlying sets $X_i$ and $V$ the product of the underlying sets $V_i$, where we write, as usual, $x=(x_i)$ for a typical element in $X$, and similarly for $V$. Obviously, we may define $d\colon X\times X\to V$ by means of $d(x,y)=(d(x_i,y_i))$. Endowing $V$ with the obvious coordinate-wise structure it is a trivial matter to verify that $V$ satisfies all the axioms of a value quantale except for one; $a,b\succ 0$ need not imply $a\wedge b\succ 0$. Let us ignore this difficulty for a short while and pretend that $V$ is a value quantale and further that $a\in V$ satisfies $a\succ 0$ if, and only if, $a_i\succ 0$ for all $i\in I$ and $a_i=\infty $ for all but finitely many $i\in I$. The following arguments are thus incorrect, as they rely on this fantasy, but we shall show below how the metric formalism actually does support the arguments by means of a suitable construction. 

Thus, under the incorrect assumption that $V$ is a value quantale, it is also the case that $(V,X,d)$, with $d(x,y)=(d(x_i,y_i))$ is a $V$-valued metric space. Our aim is to show that if $(V_i,X_i,d)$ is $\Sigma$-connected for all $i\in I$, then so is $(V,X,d)$. Thus, let $R$ be a $\Gamma$-scale on $X$ and $x,y\in X$. We proceed informally in order to construct a walk from $x$ to $y$. Since $R(x)\succ 0$ it follows that $R(x)_i\succ 0$ at all but finitely many $i\in I$. Making the further assumption that each $(V_i,X_i,d)$ is \emph{locally finite}, i.e., $d(x_i,y_i)<\infty $ for all $x_i,y_i\in X_i$, it follows that in one step we can change some of the coordinates of $x$ to obtain $\hat {x}\in X$ agreeing with $y$ at all but finitely many coordinates. To change one more of the coordinates where $\hat {x}$ and $y$ still do not agree, say at position $i_0\in I$, we proceed as follows. Consider the function $X_{i_0}\to X$ obtained by replacing the $i_0$-th coordinate in $\hat x$ by $z$, for each $z\in X_i$. Composing with $R$ then yields a function $R_{i_0}\colon X_{i_0}\to L$. Let us say that $\Sigma$ is \emph{product compatible} if each such $R_{i_0}$ is a $\Sigma$-scale. Since $X_{i_0}$ is $\Sigma$-connected there exists a walk in $X_{i_0}$ connecting $x_{i_0}$ to $y_{i_0}$. From the definition of $d$ on $X$ it follows that that walk gives rise to a walk in $X$ simply by augmenting each step with all the remaining coordinates in $\hat x$. We are thus able to change each of the coordinates in $\hat x$ in turn to match the coordinates of $y$ and as only finitely many coordinates need to be taken care of, a finite walk from $\hat x$ to $y$ exists.

We now present the formal details. The technical difficulty with the failure of the component-wise product of value quantales forming a value quantale is solved by appealing to the construction $\Gamma$ from \cite{WeissComparing}: the component-wise structure on the product $V$ of value quantales $\{V_i\}_{i\in I}$,  together with the choice of positives to be all $a\in V$ with $a_i\succ 0$ for all $i\in I$ and $a_i=\infty $ for all but finitely many $i\in I$, forms a Kopperman value semigroup, and applying $\Gamma$ to it yields a value quantale $W$ together with an injection $V\to W$. The above arguments, with slight changes, are unaffected, so we may now state the general result. We do not repeat the details of the proof. 

\begin{thm}
Assume $\Sigma$ is a product compatible scale system. Let $\{(V_i,X_i,d)\}_{i\in I}$ be a set indexed family of spaces. Let $W$ be the product value quantale of $\{V_i\}_{i\in I}$ as constructed above, and $d\colon X\times X\to V$ the product metric on $X$, the product of the sets $\{X_i\}_{i\in I}$. If each $(V_i,X_i,d)$ is locally finite and $\Sigma$-connected, then $(V,X,d)$ is $\Sigma$-connected (and locally finite, but that is not the issue). 
\end{thm}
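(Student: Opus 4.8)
The plan is to fix an arbitrary $\Sigma$-scale $R$ on the product space $(W,X,d)$---whose metric takes values in $V$ embedded into the honest value quantale $W$ via the $\Gamma$-construction of \cite{WeissComparing}---together with two points $x=(x_i)$ and $y=(y_i)$, and to exhibit an $R$-walk from $x$ to $y$; as $R$ ranges over all $\Sigma$-scales this is exactly $\Sigma$-connectedness. The construction has three stages, mirroring the informal argument preceding the statement but now verifying that each move is a genuine $R$-step in $W$. The recurring device is the family of \emph{slice maps}, which freeze all but one coordinate and vary the remaining one through a single factor $X_{i_0}$; product compatibility of $\Sigma$ is tailored precisely so that $R$ pulls back along these slices to $\Sigma$-scales on the factors.

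First I would exploit the defining feature of the positives of $W$: an element $a\succ 0$ has $a_i=\infty$ for all but finitely many $i\in I$. Applied to $R(x)\succ 0$, this makes $F=\{i\in I\mid R(x)_i\neq\infty\}$ finite. Using local finiteness of each factor, so that $d(x_i,y_i)<\infty$ and hence $d(x_i,y_i)\prec\infty$, I would define $\hat x\in X$ by $\hat x_i=y_i$ for $i\notin F$ and $\hat x_i=x_i$ for $i\in F$. Coordinatewise one then has $d(x,\hat x)_i\prec R(x)_i$ for every $i$---namely $d(x_i,y_i)\prec\infty=R(x)_i$ off $F$, and $0\prec R(x)_i$ on $F$ since $R(x)_i\succ 0$---and the content of this stage is to promote this coordinatewise domination to the single relation $d(x,\hat x)\prec R(x)$ certifying $(x,\hat x)$ as an $R$-step. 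After this one step, $\hat x$ and $y$ differ only on the finite set $F$.

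Next I would clear the finitely many coordinates of $F$ one at a time, maintaining a current point $p$ (initially $\hat x$) that agrees with $y$ on every coordinate cleared so far. For the next index $i_0\in F$, consider the slice map $s_{p,i_0}\colon X_{i_0}\to X$ sending $z$ to the tuple agreeing with $p$ off $i_0$ and equal to $z$ there, compose $R$ with it and take the $i_0$-th coordinate to obtain $R_{i_0}\colon X_{i_0}\to V_{i_0}$, $R_{i_0}(z)=R(s_{p,i_0}(z))_{i_0}$. Product compatibility of $\Sigma$ is exactly the hypothesis that $R_{i_0}$ is a $\Sigma$-scale on $X_{i_0}$. Since $X_{i_0}$ is $\Sigma$-connected there is an $R_{i_0}$-walk from $p_{i_0}=x_{i_0}$ to $y_{i_0}$; pushing it forward through $s_{p,i_0}$ produces a walk in $X$ that alters only the $i_0$-th coordinate, and the definition of the product metric guarantees each image pair is again an $R$-step. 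Replacing $p$ by its image endpoint and iterating over $F$ yields, after finitely many rounds, a finite $R$-walk from $\hat x$ to $y$; prepending the big step $(x,\hat x)$ gives the desired $R$-walk from $x$ to $y$.

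The step I expect to be the genuine obstacle is the two promotions from coordinatewise data to a single product-level relation $\prec$ in $W$: the certification of the big step $(x,\hat x)$, and the certification that each slice-lifted pair is an $R$-step. Well-aboveness in $W$ is not coordinatewise $\prec$; it is the relation carried along the injection $V\to W$ supplied by the $\Gamma$-construction, and it interacts delicately with the nonstandard positives of $V$, in which all but finitely many coordinates are pinned at $\infty$. I would discharge this by arguing from the explicit description of $\prec$ inherited from the Kopperman value semigroup on $V$, where I expect the cofinitely-$\infty$ shape of a positive target to reduce the infima witnessing $\prec$ to the finite set $F$, so that coordinatewise $\prec$ on $F$ together with $\prec\infty$ elsewhere assembles into the required product relation. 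This reduction is precisely what local finiteness feeds, and it is why neither local finiteness nor the nonstandard choice of positives can be dispensed with. Granting this relation-transfer, the remaining points---that finite concatenations of walks are walks and that only the coordinates of $F$ ever require adjustment---are routine, so the local-to-global behaviour of $\prec$ in $W$ carries the real weight of the argument.
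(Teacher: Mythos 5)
Your proposal follows essentially the same route as the paper's own (informal) argument: one big step handling the cofinitely many coordinates where $R(x)_i=\infty$ via local finiteness, then finitely many slice-lifted walks through the factors using product compatibility and $\Sigma$-connectedness of each $X_{i_0}$. You are in fact more explicit than the paper about the one genuinely delicate point---assembling coordinatewise $\prec$ into the well-above relation of the quantale $W$ produced by the $\Gamma$-construction---which the paper itself leaves to the reader with the remark that the details are unaffected.
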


The familiar result that the product of a family of connected topological spaces $\{(X_i, \tau _i)\}_{i\in I}$ is connected follows by noting that $\Sigma_a$ is (trivially) product compatible, that 
the Flagg metrization of $X_i$ by means of $d\colon X_i\times X_i \to \Omega(\tau_i)$ is locally finite (since $\infty \in \Omega(\tau_i)$ is the empty collection), and that the 
metric product above agrees with the topological product (in the sense that the 
open ball topology induced by $d\colon X\times X\to W$ is the product of the open ball topologies induced by each $d\colon X_i\times X_i\to V_i$). 

\subsection*{Acknowledgements}
The author thanks David Milovich for helpful remarks and the referee for suggestions that led to the current form of the article.

%This research was partly supported by NSF (grant no. XXXX).

%\begin{thebibliography}{HD}

%% Use the widest label as the parameter.
%% Reference items can be numbered or have labels of your choice, as below.

%% In IMPAN journals, only the title is italicized; boldface is not used.
%% Our software will add links to many articles; for this, enclosing volume numbers in { } is helpful
%% Do not give the issue number unless the issues are paginated separately.

%%%%%%% To ease editing, use normal size:

\normalsize
\baselineskip=17pt

%%%%%%%%%%%%%%%

\bibliographystyle{plain}
\bibliography{refs}

\end{document}